\keywords{Goldbach conjecture, semidomain, monoid semidomain, group semidomain, Furstenberg, power series, groups series semidomain}
\subjclass[2010]{Primary: 11P32, 16Y60; Secondary: 20M13}
\begin{document}

\title{Goldbach Theorems for Group Semidomains}
\author{Eddy Li, Advaith Mopuri, and Charles Zhang}
\date{17 August 2024}
\begin{abstract}
A semidomain is a subsemiring of an integral domain. We call a semidomain $S$ additively reduced if $0$ is the only invertible element of the monoid $(S, +)$, while we say that $S$ is additively Furstenberg if every non-invertible element of $(S,+)$ can be expressed as the sum of an atom and an element of $S$. In this paper, we study a variant of the Goldbach conjecture within the framework of group semidomains \( S[G] \) and group series semidomains \( S\sg \), where \( S \) is both an additively reduced and additively Furstenberg semidomain and \( G \) is a torsion-free abelian group. In particular, we show that every non-constant polynomial expression in $S[G]$ can be written as the sum of at most two irreducibles if and only if the condition \(\mathscr{A}_+(S) = S^\times\) holds.
\end{abstract}
\maketitle

\section{Introduction}

\smallskip

The Goldbach conjecture is a long-standing problem in number theory that posits that every even integer greater than $2$ can be expressed as the sum of two prime numbers. First introduced in a letter between Goldbach and Euler in 1742, this conjecture has been extensively tested for large integers, yet a complete proof has eluded mathematicians. The conjecture's simplicity in statement, combined with its profound complexity, has inspired diverse research avenues and numerous analogues.

Many variants of the Goldbach conjecture have been formulated for classes of polynomial rings. For instance, Hayes~\cite{hayes} proved that every non-constant polynomial $f \in \ZZ[x]$ can be written as the sum of two irreducible polynomials of degree at most $\deg(f)$. This result was subsequently improved by Kozek~\cite{kozek} and Saidak~\cite{saidak}, and later extended by Pollack~\cite{pollack}. Meanwhile, analogues of the Goldbach conjecture for polynomials with coefficients in a finite field have been studied by Bender~\cite{bender}, Car and Gallardo~\cite{car}, and Effinger and Hayes~\cite{effinger}. More recently, Paran~\cite{paran} investigated a version of the Goldbach conjecture in the ring of formal power series over the integers. 

Although some previous formulations of Goldbach-type theorems have been set within the context of commutative rings, we suggest that semirings lacking nontrivial additive inverses offer a more natural algebraic framework for such analogues, as this aligns with Goldbach's focus on positive elements. Inspired by the previous observation, Liao and Polo~\cite{liaopolo} initiated the study of Goldbach analogues in the context of polynomial semirings. Specifically, they showed that every non-constant (Laurent) polynomial in $\NN_0[x^{\pm 1}]$ can be written as the sum of at most two irreducibles (\!\!\cite[Theorem~1.1]{liaopolo}). This result closely aligns with the spirit of the Goldbach conjecture. Specifically, since the units in $\mathbb{N}_0[x^{\pm 1}]$ are precisely the integer powers of $x$, proving \cite[Theorem~1.1]{liaopolo} involves partitioning a given set of units into two subsets, each corresponding to an irreducible polynomial with positive integer coefficients.

A semidomain is a special type of semiring contained in an integral domain. One can think of semidomains as integral domains that do not require additive inverses. For example, both $\NN_0$ and $\NN_0[x]$ are semidomains. The algebraic and factorization properties of semidomains have generated some interest lately (see, for instance, \cite{CF19, chapmanpolo, fox2023, polo}). We call a semidomain $S$ additively reduced if the monoid $(S, +)$ contains exactly one invertible element (i.e., $0$). Kaplan and Polo~\cite{kaplan2023goldbach} extended the study of Goldbach analogues to the class of polynomials with coefficients in an additively reduced semidomain. In particular, they provided a simpler proof of \cite[Theorem~1.1]{liaopolo} not relying on assumptions about the distribution of primes in $\mathbb{Z}$, some of which played a crucial role in \cite{liaopolo}. In this paper, we study a variant of the Goldbach conjecture within the framework of group semidomains \( S[G] \) and group series semidomains \( S\sg \), where \( S \) is an additively reduced semidomain and \( G \) is a torsion-free abelian group.

In Section $2$, we review some of the standard notation and terminology we used throughout.

For an additively reduced semidomain $S$ and a torsion-free group $G$, we define the group semidomain \( S[G] \) to be the semidomain of all formal finite sums with coefficients in $S$ and exponents in $G$, where addition and multiplication are defined as for polynomials. In Section~3, we describe the group semidomains $S[G]$ with $S$ additively Furstenberg (i.e., every nonzero element of $S$ can be expressed as the sum of an atom and an element of $S$) that satisfies an analogue of the Goldbach conjecture. 

We conclude in Section~4, by providing analogues of the weak Goldbach conjecture for $S\sg$, the semidomain of infinite formal sums with coefficients in $S$ and exponents in $G$, when $S$ is additively Furstenberg. In this context, we characterize the group series semidomains $S\sg$, where $G$ is finitely generated, that satisfy a variant of the weak Goldbach conjecture.

\section{Background}

\smallskip

Let us review some of the standard notation and terminology that we will use later. For a comprehensive background on semiring theory, we recommend the monograph~\cite{golan}. Let $\mathbb{Z}$, $\mathbb{N}$, and $\mathbb{N}_0$ denote the sets of integers, positive integers, and nonnegative integers, respectively. For $m, n \in \mathbb{N}_0$, we define $\llbracket m, n \rrbracket$ as the set $\{k \in \mathbb{N}_0 \mid m \leq k \leq n\}$.

Throughout this paper, a \emph{monoid} is a semigroup with an identity element denoted by $0$. We shall tacitly assume that a monoid is always commutative and cancellative and, unless we specify otherwise, we always use additive notation for monoids. For the remainder of this section, let $M$ be a monoid. The set of invertible elements of $M$ is denoted by $\unit(M)$. We say that $M$ is \emph{reduced} if $\unit(M) = \{0\}$. For $b,c \in M$, we say that $b$ \emph{divides} $c$ in $M$, denoted $b\mid_M c$, if there exists $b' \in M$ such that $c = b + b'$. If the monoid $M$ is clear from context, we simply write $b\mid c$. A submonoid $N$ of $M$ is \emph{divisor-closed} if for every $b \in N$ and $c \in M$ the relation $c \mid_M b$ implies that $c \in N$. Let $S$ be a nonempty subset of $M$. We use the term \emph{common divisor of $S$} to refer to an element $d\in M$ that divides all elements of $S$. We call a common divisor $d$ of $S$ a \emph{greatest common divisor} if it is divisible by every common divisor of $S$. We denote by $\gcdd_M(S)$ the set consisting of all greatest common divisors of $S$ and drop the subscript when there is no risk of confusion.

An element $a \in M \setminus \unit(M)$ is called an \emph{atom} (or \emph{irreducible}) if for every $b, c \in M$ the equality $a = b + c$ implies that either $b \in \unit(M)$ or $c \in \unit(M)$. We denote by $\atom(M)$ the set of atoms of $M$. The monoid $M$ is \emph{atomic} provided that every $m \in M \setminus \unit(M)$ can be written as a finite sum of atoms. On the other hand, $M$ is said to be \emph{Furstenberg} if every $m\in M\setminus \unit(M)$ is divisible by an atom of $M$. Observe that every atomic monoid is Furstenberg. 

If every element in a monoid $M$ has an inverse, then we say that $M$ is an \emph{abelian group}. Given our assumption of commutativity for monoids, all groups mentioned will be assumed to be abelian. A group $G$ is called \emph{torsion-free} if for $g_1, g_2\in G$ and $n\in \NN$, the equality $ng_1=ng_2$ of addition repeated $n$ times implies that $g_1=g_2$. As established by Levi~\cite{levi1913}, every (abelian) group that is torsion-free also has a linear order $(\le)$ that is compatible with the group operation. In other words, the relation $(\le)$ imposes an ordering on $G$ for which any two group elements are comparable, with the additional requirement that $b + d\le c + d$ holds if and only if $b\le c$ for all $b,c,d\in G$.

A \emph{semiring} \( S \) is a nonempty set with two binary operations, \emph{addition} \((+)\) and \emph{multiplication} \((\cdot)\). The structure \((S, +)\) forms a monoid with identity element \( 0 \), and \((S \setminus \{0\}, \cdot)\) is a non-cancellative monoid with identity element \( 1 \). Additionally, multiplication distributes over addition, so that \( b(c + d) = bc + bd \) for all \( b, c, d \in S \). In other contexts, commutativity of \((S \setminus \{0\}, \cdot)\) may not be necessary, but it will be required in this paper. A subset $S'$ of a semiring $S$ is a \emph{subsemiring} if $(S', +)$ is a submonoid of $(S, +)$ that contains $1$ and is closed under multiplication. Note that every subsemiring of $S$ is also a semiring. We now define the most frequently used structure in this paper: semidomains.

\begin{defn}
    A \emph{semidomain} is a subsemiring of an integral domain.
\end{defn}

For the rest of this section, let $S$ be a semidomain. We say that $(S \setminus \{0\}, \cdot)$ is the \emph{multiplicative monoid} of $S$, denoted by $S^*$. Following standard notation from ring theory, we refer to the invertible elements of the monoid $S^*$ as \emph{units}, and we denote the set (which is, in fact, a group) of units of $S$ by $S^\times$. For $b, c \in S^*$ such that $b$ divides $c$ in $S^*$, we write $b \mid_S c$ (instead of $b \mid_{S^*} c$). Also, for a nonempty subset $B$ of $S$, the term $\gcdd(B)$ represents the set of greatest common divisors of $B$ in the monoid $S^*$. Additionally, we denote the set of atoms of the multiplicative monoid $S^*$ by $\atom(S)$ instead of $\atom(S^*)$, and we denote the set of atoms of the additive monoid $(S, +)$ by $\atom_{+}(S)$. Finally, a semidomain $S$ is \emph{additively reduced} (resp., \emph{additively Furstenberg}) if the monoid $(S,+)$ is reduced (resp., Furstenberg). 

Let \( S \) be a semidomain and let \( G \) be a torsion-free (abelian) group. The \emph{group semidomain} \( S[G] \) is the set of all polynomial expressions
\[
S[G] = \left\{ \sum_{g \in G} s_g x^g \, \middle| \, s_g \in S \text{ and } s_g = 0 \text{ for all but finitely many } g \in G \right\}.
\]
The addition and multiplication operations in \( S[G] \) are defined as for polynomials: For elements \( f = \sum_{g \in G} s_g x^g \) and \( h = \sum_{g \in G} s_g' x^g \) in \( S[G] \), their sum is given by
   \[
   f + h = \sum_{g \in G} (s_g + s_g') x^g,
   \]
where the sum \( s_g + s_g' \) is taken in \( S \). The product of \( f = \sum_{g \in G} s_g x^g \) and \( h = \sum_{r \in G} s_r x^r \) is defined by
\[
   f \cdot h = \sum_{k \in G} \left( \sum_{\substack{g, r \in G \\ g + r = k}} s_g s_r \right) x^k,
\]
where the inner product \( s_g s_r \) is taken in \( S \) and \( g + r \) is the group operation on \( G \). It can be verified that \( S[G] \), with these operations, forms a semidomain. From this point on, unless specified otherwise, we assume that in any polynomial expression \( f = \sum_{i=0}^n s_i x^{g_i} \), the exponents satisfy \( g_0 > g_1 > \dots > g_n \) and each \( s_i \) belongs to \( S^* \) for every \( i \in \mathbb{N}_0 \). We define the \emph{degree} of $f$ as $\deg(f)=\max\{g_0,g_1,\dots,g_n\}=g_0$. In a similar spirit, we can define the \emph{group series semidomain} $S\sg$ as the set of all series expressions
\[  
    S\sg = \left\{ \sum_{i = 0}^{\infty} s_i x^{g_i} \, \middle| \, s_i \in S,\, g_i \in G, \text{ and } g_i < g_{i + 1} \text{ for every } i \in \NN_0 \right\}.
\]
The operations of addition and multiplication in $S\sg$ are defined analogously to those in $S[G]$. From this point on, unless specified otherwise, we assume that in any series expression $f = \sum_{i = 0}^{\infty} s_ix^{g_i}$, the coefficients satisfy that if $s_i = 0$ then $s_{i + 1} = 0$ for every $i \in \NN_0$. Observe that if $S$ is additively reduced, then $S[G]$ is a divisor-closed submonoid of $S\llbracket G \rrbracket$. Given an element $f \in S\sg$, we denote by $\supp(f)=\{g_i\mid s_i\neq0\}$ the set of exponents in the canonical representation of $f$. We refer to this set as the \emph{support of $f$}. For an additively reduced semidomain $S$ and a torsion-free (abelian) group $G$, it is not hard to check that
$$S\llbracket G\rrbracket^\times = S[G]^\times = \{sx^g\mid s\in S^{\times} \text{ and } g\in G\}.$$ Finally, an element $f\in S\llbracket G\rrbracket^*$ is called \emph{monolithic} if for $p,q\in S\llbracket G \rrbracket$, the condition $f=pq$ implies that either $p$ or $q$ is a monomial. Monolithicity represents a relaxed version of the concept of irreducibility.

\section{A Goldbach Theorem for Group Semidomains}
\smallskip
In this section, we establish an analogue of the Goldbach conjecture for group semidomains $S[G]$, where $S$ is an additively reduced and additively Furstenberg semidomain and $G$ is a torsion-free group. More specifically, we show that every non-constant polynomial expression in $S[G]$ can be written as the sum of at most two irreducibles if and only if $\atom_+(S)=S^\times$, which extends~\cite[Theorem~3.6]{kaplan2023goldbach}. Our approach closely follows key ideas presented in~\cite{kaplan2023goldbach}.

\begin{lemma}\label{lem31}
    Let \( S \) be an additively reduced semidomain and \( G \) a torsion-free (abelian) group. Then $f=\sum_{i=0}^n s_ix^{g_i} \in S[G]$ with $|\supp(f)|>1$ is irreducible if and only if $f$ is monolithic and $1\in\gcdd(\{s_0,s_1,\dots,s_n\})$.
\end{lemma}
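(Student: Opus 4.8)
The plan is to prove both directions by tracking how factorizations in $S[G]$ interact with (i) the group-semidomain structure coming from $G$ and (ii) the semidomain structure of the coefficient ring $S$. The key observation is that multiplication in $S[G]$ never destroys additive-reducedness: because $S$ is additively reduced and $G$ is torsion-free (hence linearly orderable by Levi), the units of $S[G]$ are exactly the monomials $sx^g$ with $s\in S^\times$, as recorded in the excerpt. I would fix an order-compatible linear order $(\le)$ on $G$ and, for $f=\sum_{i=0}^n s_i x^{g_i}$ with $g_0>\dots>g_n$, think of $g_n$ as the ``bottom'' exponent; multiplying $f$ by the monomial $s_n^{-1}x^{-g_n}$ (working inside $S[G]$ over the quotient field, if needed, though one should stay inside $S[G]$ when $s_n\in S^\times$) does not change irreducibility, so there is no loss in normalizing.

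For the forward direction, suppose $f$ is irreducible with $|\supp(f)|>1$. Monolithicity is immediate since it is a weakening of irreducibility: if $f=pq$ then $p$ or $q$ is a unit, hence a monomial, so certainly $f=pq$ forces $p$ or $q$ to be a monomial. For the gcd claim, let $d\in\gcdd_{S^*}(\{s_0,\dots,s_n\})$; then $d\mid_S s_i$ for every $i$, so we may write $s_i=d\,t_i$ with $t_i\in S$ (allowing $t_i=0$ if $s_i=0$, but here all $s_i\in S^*$), and then $f=(dx^0)\big(\sum_i t_i x^{g_i}\big)$ is a factorization in $S[G]$. Since $|\supp(f)|>1$, the second factor $\sum_i t_i x^{g_i}$ is not a monomial, hence not a unit of $S[G]$; irreducibility of $f$ then forces $dx^0\in S[G]^\times$, i.e. $d\in S^\times$, which means $1\in\gcdd(\{s_0,\dots,s_n\})$.

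For the converse, assume $f$ is monolithic and $1\in\gcdd(\{s_0,\dots,s_n\})$, and suppose $f=pq$ in $S[G]$. By monolithicity one of the factors, say $q$, is a monomial $q=cx^h$ with $c\in S^*$. Then $p=\sum_i s_i' x^{g_i'}$ with $s_i'\in S^*$ and $s_i = c\, s_j'$ for the appropriate index matching (after shifting exponents by $h$); in particular $c\mid_S s_i$ for every $i$, so $c$ is a common divisor of $\{s_0,\dots,s_n\}$. Since $1$ is a greatest common divisor, $c\mid_S 1$, so $c\in S^\times$ and hence $q=cx^h$ is a unit of $S[G]$. Therefore every factorization $f=pq$ has a unit factor, i.e. $f$ is irreducible. (We should also note $f$ itself is a non-unit: $|\supp(f)|>1$ means $f$ is not a monomial, so $f\notin S[G]^\times$.)

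The step I expect to require the most care is the bookkeeping in the converse: when $q=cx^h$ is a monomial and $f=pq$, one must check carefully that the coefficients of $p$, after the exponent shift by $h$, are precisely $c^{-1}s_i$ in the fraction field and that this forces $c\mid_S s_i$ \emph{inside $S$} (not merely in the ambient domain). This uses that $S[G]$ is a semidomain sitting inside the domain $T[G]$ (where $T$ is a domain containing $S$) and that the product $pq=f$ read coefficient-by-coefficient is a single term $s_i = c s_j'$ rather than a genuine sum, precisely because $q$ has singleton support — so no cancellation or combination of coefficients occurs. One should also confirm the harmless reduction that $\gcdd(\{s_0,\dots,s_n\})$ is nonempty / well-behaved is not actually needed: the argument only uses that \emph{if} $1$ lies in that set then every common divisor divides $1$, and conversely produces a common divisor $c$ that must then be a unit.
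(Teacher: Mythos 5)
Your proof is correct and follows essentially the same route as the paper's: monolithicity falls out of irreducibility because the units of $S[G]$ are exactly the monomials $sx^g$ with $s\in S^\times$, and the gcd condition amounts to every common divisor of the coefficients being a unit, which is exactly what the factorization $f=(dx^0)\bigl(\sum_i t_i x^{g_i}\bigr)$ and, in the converse, the monomial factor $cx^h$ deliver. The only cosmetic point is that in the forward direction you should start from an arbitrary common divisor $d$ of $\{s_0,\dots,s_n\}$ rather than from an element of $\gcdd(\{s_0,\dots,s_n\})$ (which a priori could be empty), precisely as your closing remark anticipates --- the computation is unchanged and this is how the paper phrases it.
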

\begin{proof}
Suppose that $f$ is irreducible. Now if $f = pq$ for some $p, q \in S[G]^*$, then either $p$ or $q$ is a (multiplicative) unit and thus a monomial. Hence $f$ is monolithic. Now, take $t \in S^*$ to be a common divisor of $s_0, \ldots, s_n$. Since $f$ is irreducible, we obtain that $t \in S^{\times}$, which concludes the proof of the direct implication. Conversely, suppose that $f$ is monolithic and $1\in\gcdd(\{s_0,s_1,\dots,s_n\})$. Assume we can write $f = pq$ for some $p, q \in S[G]^*$. Since $f$ is monolithic, we may assume that $p = sx^g$ for some $s \in S$ and $g \in G$. Note that $s \in S^{\times}$ because $1\in\gcdd(\{s_0,s_1,\dots,s_n\})$. Consequently, the factor $p$ is necessarily a unit which, in turn, implies that $f$ is irreducible.  
\end{proof}

The following two lemmas establish criteria for when polynomial expressions in a group semidomain are monolithic.
\begin{lemma}\label{lem32}
    Let \( S \) be an additively reduced semidomain and \( G \) a torsion-free (abelian) group. For an element $f=\sum_{i=0}^ns_ix^{g_i}\in S[G]$, the following statements hold.
    \smallskip
    \begin{itemize}
        \item[(1)] If $|\supp(f)|\ge2$ and $2g_1<g_0+g_n$, then $f$ is monolithic.
        \item[(2)] If $|\supp(f)|>3$ and $2g_1\le g_0+g_n$, then $f$ is monolithic.
    \end{itemize}
\end{lemma}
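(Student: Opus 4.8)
The plan is to argue by contradiction in both cases, assuming a factorization $f = pq$ in which neither $p$ nor $q$ is a monomial, and then extracting a contradiction from the arithmetic of the exponents. Since $S$ is additively reduced, the units of $S[G]$ are exactly the monomials $sx^g$ with $s \in S^\times$, so "monolithic" really does mean "cannot be written as a product of two genuinely multi-term factors." Write $p = \sum_{j=0}^{k} a_j x^{e_j}$ and $q = \sum_{\ell=0}^{m} b_\ell x^{d_\ell}$ with $e_0 > e_1 > \dots > e_k$ and $d_0 > d_1 > \dots > d_m$ and all coefficients in $S^*$, and $k, m \ge 1$ by assumption. The key point — and the reason additive reducedness matters — is that because $S$ sits inside an integral domain, there is no cancellation of leading or trailing terms: the top term of $pq$ is $a_0 b_0 x^{e_0 + d_0}$ and the bottom term is $a_k b_m x^{e_k + d_m}$, so matching against $f$ forces $g_0 = e_0 + d_0$ and $g_n = e_k + d_m$. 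Likewise the coefficient of $x^{e_0 + d_1}$ in $pq$ receives the contribution $a_0 b_1$ (plus possibly other products $a_i b_j$ with $e_i + d_j = e_0 + d_1$, but every such pair has $e_i \le e_0$ and hence $d_j \ge d_1$, and since $d_1$ is the second-largest $d$, in fact $i = 0$, $j = 1$ is forced unless some $e_i = e_0$, impossible), so $a_0 b_1 \ne 0$ and therefore $e_0 + d_1 \in \supp(f)$. Since $e_0 + d_1 < e_0 + d_0 = g_0$, we get $e_0 + d_1 \le g_1$. A symmetric argument with the second term of $p$ gives $e_1 + d_0 \le g_1$ as well.

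Adding these two inequalities yields $(e_0 + d_1) + (e_1 + d_0) \le 2g_1$, i.e. $(e_0 + d_0) + (e_1 + d_1) \le 2g_1$. Now $e_1 + d_1$ is the exponent of a nonzero term of $pq$ (its coefficient includes the product $a_1 b_1$, and again no pair with larger index sum can cancel it by the same support-ordering argument), so $e_1 + d_1 \ge g_n = e_k + d_m$; also $e_0 + d_0 = g_0$. Substituting, $g_0 + g_n \le g_0 + (e_1 + d_1) \le 2g_1$. This directly contradicts the hypothesis $2g_1 < g_0 + g_n$ of part~(1), so $f$ must be monolithic, completing case~(1).

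For part~(2) the hypothesis is only $2g_1 \le g_0 + g_n$, so the chain above gives $g_0 + g_n \le 2g_1 \le g_0 + g_n$, forcing equality throughout: in particular $e_1 + d_1 = g_n = e_k + d_m$ and $e_0 + d_1 = e_1 + d_0 = g_1$. From $e_0 + d_1 = e_1 + d_0$ we get $e_0 - e_1 = d_0 - d_1 > 0$; combined with $e_1 + d_1 = e_k + d_m$ and the orderings, this forces $k = m = 1$, so $p = a_0 x^{e_0} + a_1 x^{e_1}$ and $q = b_0 x^{d_0} + b_1 x^{d_1}$ each have exactly two terms. But then $pq = a_0 b_0 x^{e_0+d_0} + (a_0 b_1 + a_1 b_0)x^{g_1} + a_1 b_1 x^{e_1 + d_1}$ has at most three terms in its support, whence $|\supp(f)| \le 3$, contradicting the hypothesis $|\supp(f)| > 3$ of part~(2). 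Hence $f$ is monolithic in this case as well.

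The main obstacle I anticipate is the repeated claim that various "corner" coefficients of $pq$ — the coefficients of $x^{e_0+d_1}$, $x^{e_1+d_0}$, and $x^{e_1+d_1}$ — are genuinely nonzero, i.e. that the relevant product $a_ib_j$ is not cancelled by other contributions. In a ring this could fail, but here $S$ is a subsemiring of an integral domain: all the $a_i, b_j$ lie in $S \subseteq$ (domain), every contributing product is nonzero, and — crucially — products of elements of $S$ never have additive inverses in $S$ beyond $0$ (additive reducedness), so a sum of such products can only be $0$ if it is empty. The careful bookkeeping is to verify, using $e_0 > e_1 > \dots$ and $d_0 > d_1 > \dots$, that each of these corner exponents is in fact achieved by a \emph{unique} pair $(i,j)$, so no summation even occurs; this is the one place the argument needs to be written out with the index inequalities spelled out explicitly, and it is what I would be most careful about in the full write-up.
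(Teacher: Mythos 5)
Your argument is correct and follows essentially the same route as the paper's: both proofs rest on the fact that, since $S$ is additively reduced and embeds in an integral domain, $\supp(pq)=\supp(p)+\supp(q)$, and then on elementary exponent arithmetic (the paper normalizes $g_n=0$ and bounds $\deg(p),\deg(q)\le g_1$, whereas you avoid normalizing and instead add the inequalities $e_0+d_1\le g_1$ and $e_1+d_0\le g_1$; your equality analysis in part~(2), forcing both factors to be binomials and then counting $|\supp(pq)|\le 3$, is a clean and complete version of the case the paper dispatches by exhibiting an element of $\supp(f)$ strictly between $g_1$ and $g_0$). One small correction to the point you flagged: the parenthetical claim that the exponent $e_0+d_1$ is achieved only by the pair $(i,j)=(0,1)$ is false in general --- the pair $(i,0)$ with $e_i=e_0+d_1-d_0$ may also contribute, as in $(x^2+x+1)(x+1)$ --- but this is harmless, since, as you observe at the end, a nonempty sum of nonzero products cannot vanish in an additively reduced semidomain; that is the justification to keep, and the uniqueness bookkeeping can simply be dropped.
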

\begin{proof}
Scaling by an appropriate power of $x$, we may assume that $g_n = 0$. Suppose that $f$ is not monolithic. Then we can write $f = pq$, where $p$ and $q$ are not monomial expressions in $S[G]$. Again, scaling by appropriate powers of $x$, we may assume that neither $p(0)$ nor $q(0)$ equals $0$. Since $S$ is additively reduced, the inclusion $\supp(p)\subseteq\supp(pq)=\supp(f)$ holds. This implies that $\deg(p) \in \{g_1, \ldots, g_{n - 1}\}$. Similarly, $\deg(q) \in \{g_1, \ldots, g_{n - 1}\}$. Thus,
\[
    g_0 = \deg(f) = \deg(p) + \deg(q) \leq 2g_1,
\]
from which statement $(1)$ readily follows. In order to establish $(2)$, assume toward a contradiction that $g_0 = 2g_1$. In other words, the equality $\deg(p) = \deg(q) = g_1$ holds. Since $|\supp(f)|>3$, we may further assume that $p$ is not a binomial. Consequently, there exists $i \in \llbracket 2,n - 1 \rrbracket$ such that $g_1 + g_i \in \supp(f)$. In other words, the inequalities $g_1 < g_1 + g_i < g_1 + g_1 = g_0$ hold, a contradiction.

\vspace{-1.2\baselineskip}
\mbox{}\qedhere
\end{proof}

\begin{lemma}\label{lem33}
    Let \( S \) be an additively reduced semidomain and \( G \) a torsion-free (abelian) group. For an element $f=\sum_{i=0}^ns_ix^{g_i}\in S[G]$, the following statements hold.
    \smallskip
    \begin{itemize}
        \item[(1)] If $|\supp(f)|\ge2$ and $2g_{n-1}>g_0+g_n$, then $f$ is monolithic.
        \item[(2)] If $|\supp(f)|>3$ and $2g_{n-1}\ge g_0+g_n$, then $f$ is monolithic.
    \end{itemize}
\end{lemma}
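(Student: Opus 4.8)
The plan is to derive Lemma~\ref{lem33} from Lemma~\ref{lem32} via the ``exponent-reversing'' automorphism of $S[G]$, which makes genuine use of the fact that $G$ is a group (so negative exponents are available). Since $G$ is abelian, $g \mapsto -g$ is an automorphism of $G$, and because $G$ is a group the monomial $x^{-g}$ lies in $S[G]$ for every $g$; thus $g \mapsto -g$ induces a map $\phi \colon S[G] \to S[G]$ with $\phi\bigl(\sum_{g\in G} s_g x^g\bigr) = \sum_{g\in G} s_g x^{-g}$. A direct check shows that $\phi$ is a semiring automorphism (indeed $\phi \circ \phi = \mathrm{id}$), that $\phi$ carries monomials to monomials, and that $|\supp(\phi(f))| = |\supp(f)|$ for every $f \in S[G]$. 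Moreover, since $S$ is additively reduced, $S[G]$ is a divisor-closed submonoid of $S\sg$, so any factorization of an element of $S[G]^*$ inside $S\sg$ already takes place in $S[G]$; hence $f$ is monolithic if and only if $f$ admits no factorization in $S[G]$ into two non-monomials, and this property is clearly preserved by the automorphism $\phi$. Consequently $f$ is monolithic if and only if $\phi(f)$ is.

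Next I would record the canonical form of $\phi(f)$. Writing $f = \sum_{i=0}^n s_i x^{g_i}$ with $g_0 > g_1 > \dots > g_n$, negation reverses the order of the exponents, so $\phi(f) = \sum_{i=0}^n \tilde s_i x^{\tilde g_i}$ with $\tilde s_i = s_{n-i}$ and $\tilde g_i = -g_{n-i}$; in particular $\tilde g_0 = -g_n$, $\tilde g_1 = -g_{n-1}$, and $\tilde g_n = -g_0$. Because the order on $G$ is compatible with the group operation, $a < b$ is equivalent to $-b < -a$ and $2(-a) = -(2a)$, so the hypothesis $2g_{n-1} > g_0 + g_n$ translates into $2\tilde g_1 < \tilde g_0 + \tilde g_n$, and likewise $2g_{n-1} \ge g_0 + g_n$ translates into $2\tilde g_1 \le \tilde g_0 + \tilde g_n$. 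Since also $|\supp(\phi(f))| = |\supp(f)|$, the cardinality hypotheses transfer verbatim, so the hypotheses of parts~(1) and~(2) of Lemma~\ref{lem33} for $f$ become, respectively, the hypotheses of parts~(1) and~(2) of Lemma~\ref{lem32} for $\phi(f)$.

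It then remains only to apply Lemma~\ref{lem32} to $\phi(f)$: in case~(1) we conclude that $\phi(f)$ is monolithic because $|\supp(\phi(f))| \ge 2$ and $2\tilde g_1 < \tilde g_0 + \tilde g_n$, and in case~(2) because $|\supp(\phi(f))| > 3$ and $2\tilde g_1 \le \tilde g_0 + \tilde g_n$; in either case $f$ is monolithic, as desired. I do not expect a real obstacle here; the only points requiring care are verifying that $\phi$ is a semiring automorphism of $S[G]$ and that monolithicity is invariant under it, which is exactly where the divisor-closedness of $S[G]$ in $S\sg$ enters. Alternatively, one could prove Lemma~\ref{lem33} directly by mirroring the proof of Lemma~\ref{lem32}, replacing $\deg(\cdot)$ throughout by the order $\min\supp(\cdot)$ and using that, since $S$ is additively reduced, $\min\supp(pq) = \min\supp(p) + \min\supp(q)$ and $\supp(p) \subseteq \supp(f)$ after scaling so that $0 \in \supp(q)$; this route costs a little more bookkeeping but avoids the automorphism entirely.
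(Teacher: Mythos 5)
Your proof is correct, but it takes a genuinely different route from the paper. The paper proves Lemma~\ref{lem33} directly, by running the argument of Lemma~\ref{lem32} ``from the other end'': after normalizing $g_n=0$ and using $\supp(p)\subseteq\supp(f)$, it bounds the smallest \emph{positive} exponents of the factors from below by $g_{n-1}$ (rather than bounding their degrees from above by $g_1$), obtaining $2g_{n-1}\le g_0$ for part (1) and forcing $f$ to be a trinomial in the borderline case of part (2). You instead deduce Lemma~\ref{lem33} formally from Lemma~\ref{lem32} via the involution $\phi(x^g)=x^{-g}$, which exists precisely because $G$ is a group; your translation of the exponent inequalities under negation is correct, and you rightly flag the one delicate point, namely that monolithicity is defined via factorizations in $S\sg$ (where $\phi$ does not act, since negation reverses the ordering of an infinite support), so one must first use the divisor-closedness of $S[G]$ in $S\sg$ to reduce to factorizations inside $S[G]$ before transporting the property along $\phi$. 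Your approach buys economy and makes the degree/order duality between the two lemmas explicit; the paper's approach is self-contained and avoids having to verify that $\phi$ is a semiring automorphism preserving monomials. Either argument is acceptable, and your closing remark correctly identifies the paper's actual strategy as the ``mirrored'' direct proof.
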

\begin{proof}
We proceed as in the proof of Lemma~\ref{lem32}. Scaling by an appropriate power of $x$, we may assume that $g_n = 0$. Suppose that $f$ is not monolithic. Then we can write $f = pq$, where $p$ and $q$ are not monomial expressions in $S[G]$. Again, scaling by appropriate powers of $x$, we may assume that neither $p(0)$ nor $q(0)$ equals $0$. Since $S$ is additively reduced, the inclusion $\supp(p)\subseteq\supp(pq)=\supp(f)$ holds. Then the smallest positive element of $\supp(p)$ is greater than or equal to $g_{n - 1}$. Similarly, the smallest positive element of $\supp(q)$ is greater than or equal to $g_{n - 1}$. This implies that $2g_{n - 1} \leq g_0$, which concludes the proof of statement $(1)$. As for $(2)$, assume toward a contradiction that $g_0 = 2g_{n - 1}$. In other words, the equality $\deg(p) = \deg(q) = g_{n - 1}$ holds. Hence $f$ is a trinomial, which is a contradiction.
\end{proof}

The following two lemmas characterize the binomials and trinomials of a group semidomain \( S[G] \) (where $S$ is additively reduced, additively Furstenberg, and satisfies the condition \( \atom_+(S) = S^\times \)) which can be expressed as the sum of exactly two irreducible elements.

\begin{lemma}\label{lem34}
    Let $S$ be a semidomain that is additively reduced, additively Furstenberg, and satisfies $\atom_+(S)=S^\times$, and let $G$ be a torsion-free (abelian) group. Consider the element $f=s_0x^{g_0}+s_1x^{g_1} \in 
    S[G]$ with $s_0, s_1\in S^*$ and $g_0, g_1\in G$. The following statements hold.
    \smallskip
    \begin{itemize}
        \item[(1)] If either $s_0 \in S^{\times}$ or $s_1 \in S^{\times}$, then $f$ is irreducible.
        \item[(2)] If $s_0 \notin S^{\times}$ and $s_1 \notin S^{\times}$, then $f$ is the sum of two irreducible elements.
    \end{itemize}
\end{lemma}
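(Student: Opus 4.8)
The plan is to handle the two statements separately, in each case reducing to irreducibility criteria via Lemma~\ref{lem31}, i.e.\ by showing the relevant binomial is monolithic and that its coefficients have $1$ as a greatest common divisor. For part (1), scaling by a power of $x$ we may assume $g_1 = 0$, so $f = s_0 x^{g_0} + s_1$ with $g_0 > 0$. A binomial with $|\supp(f)| = 2 \ge 2$ and only two exponents trivially satisfies $2g_1 = 0 < g_0 = g_0 + g_n$ (taking $n=1$, $g_n = g_1 = 0$), so Lemma~\ref{lem32}(1) gives that $f$ is monolithic. It remains to check $1 \in \gcdd(\{s_0, s_1\})$: if say $s_1 \in S^\times$, then any common divisor $t \in S^*$ of $s_0$ and $s_1$ divides the unit $s_1$, hence $t \in S^\times$, so $1$ is a greatest common divisor. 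By Lemma~\ref{lem31}, $f$ is irreducible, and symmetrically if $s_0 \in S^\times$.

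For part (2), we have $s_0, s_1 \notin S^\times$. Again scale so that $g_1 = 0$ and $g_0 > 0$. The idea is to use the additively Furstenberg hypothesis together with $\atom_+(S) = S^\times$ to split each coefficient. Since $S$ is additively Furstenberg and $s_0 \notin S^\times = \atom_+(S) \cup \{0\}$ — here I should note that $s_0 \notin S^\times$ together with $s_0 \ne 0$ means $s_0$ is a non-invertible element of $(S,+)$, so it is divisible by an additive atom; but every additive atom is a unit of $S$ by hypothesis — we can write $s_0 = u_0 + s_0'$ with $u_0 \in \atom_+(S) = S^\times$ and $s_0' \in S$, and $s_0' \ne 0$ since otherwise $s_0 = u_0 \in S^\times$, contradiction. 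Similarly $s_1 = u_1 + s_1'$ with $u_1 \in S^\times$ and $s_1' \in S^*$. Then set
\[
p = u_0 x^{g_0} + s_1', \qquad q = s_0' x^{g_0} + u_1,
\]
so that $p + q = s_0 x^{g_0} + s_1 = f$. Both $p$ and $q$ are binomials (their coefficients are nonzero, and $u_0, u_1 \in S^\times \subseteq S^*$), each having a unit among its coefficients, so by part (1) both $p$ and $q$ are irreducible. Hence $f$ is the sum of two irreducibles.

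The main point requiring care is the reduction in part (2) from "$s_i \notin S^\times$" to "$s_i$ is a non-invertible element of the additive monoid $(S,+)$, hence additively-Furstenberg-divisible by an additive atom." This hinges on $S$ being additively reduced: the only additively invertible element is $0$, so a nonzero $s_i$ is automatically non-invertible in $(S,+)$ and the Furstenberg property applies. The hypothesis $\atom_+(S) = S^\times$ is exactly what upgrades the resulting additive atom to a multiplicative unit, which is what part (1) needs. I would also double-check the degenerate edge cases — e.g.\ that after the split neither $p$ nor $q$ collapses to a monomial (guaranteed since $s_0', s_1', u_0, u_1$ are all nonzero) — but these are routine. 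No delicate estimates or case analysis on the group $G$ are needed beyond invoking Lemma~\ref{lem32}(1) for the monolithic step.
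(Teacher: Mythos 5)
Your proposal is correct and follows essentially the same route as the paper: part (1) via Lemma~\ref{lem31} together with the monolithicity of binomials, and part (2) via the cross-wise split $f=[u_0x^{g_0}+s_1'x^{g_1}]+[s_0'x^{g_0}+u_1x^{g_1}]$ obtained from the additively Furstenberg property and $\mathscr{A}_+(S)=S^\times$, which is exactly the paper's decomposition. Your added checks (that $s_0',s_1'\neq 0$ and the explicit appeal to Lemma~\ref{lem32}(1)) are sound; the only quibble is the stray identification $S^\times=\mathscr{A}_+(S)\cup\{0\}$, which should simply read $S^\times=\mathscr{A}_+(S)$ and does not affect the argument.
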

\begin{proof}
Note that statement $(1)$ follows from Lemma~\ref{lem31}, along with the fact that binomials are always monolithic. As for $(2)$, since $S$ is additively Furstenberg and $\atom_+(S)=S^\times$, there exist elements $u_0, u_1 \in S^\times$ and $v_0, v_1 \in S^*$ such that $s_0 = u_0 + v_0$ and $s_1 = u_1 + v_1$. Thus,
\[
    f = [u_0x^{g_0} + v_1x^{g_1}] + [v_0x^{g_0} + u_1x^{g_1}],
\]
where both binomials between brackets are irreducible, which concludes our proof.

\mbox{}\qedhere
\end{proof}

\begin{lemma}\label{lem35}
   Let $S$ be a semidomain that is additively reduced, additively Furstenberg, and satisfies $\atom_+(S)=S^\times$, and let $G$ be a torsion-free (abelian) group. Consider the element $f=s_0x^{g_0}+s_1x^{g_1} + s_2x^{g_2} \in 
    S[G]$ with $s_0, s_1, s_2\in S^*$ and $g_0, g_1, g_2\in G$. Then $f$ is the sum of two irreducibles unless $f$ is the sum of three units, in which case it is irreducible.
\end{lemma}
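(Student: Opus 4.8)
The plan is to split on whether all three coefficients $s_0,s_1,s_2$ are units. In the exceptional case $s_0,s_1,s_2\in S^\times$ I will prove that $f$ is irreducible, and in every other case I will display $f$ as a sum of exactly two irreducibles, obtained — in the spirit of Lemma~\ref{lem34}(2) — by peeling an atom of $(S,+)$ off a suitable coefficient, using that $S$ is additively Furstenberg with $\atom_+(S)=S^\times$: for any $s\in S^*$ one can write $s=u+v$ with $u\in S^\times$ and $v\in S$, where $v=0$ occurs precisely when $s\in S^\times$.

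Before the case split I would record the one new technical point: a trinomial $t_0x^{h_0}+t_1x^{h_1}+t_2x^{h_2}\in S[G]$ (with $h_0>h_1>h_2$ and $t_i\in S^*$) whose middle coefficient $t_1$ lies in $S^\times$ is monolithic. The argument copies the proofs of Lemmas~\ref{lem32} and \ref{lem33}: scaling by a monomial we may take $h_2=0$; if the trinomial equals $pq$ with $p,q$ non-monomial, then after scaling $p,q$ so that $0\in\supp(p)\cap\supp(q)$ we have $\supp(p),\supp(q)\subseteq\supp(pq)$ because $S$ is additively reduced, and a degree count forces $\deg p=\deg q=h_1$, hence $h_0=2h_1$, $p=ax^{h_1}+b$ and $q=cx^{h_1}+d$ with $a,b,c,d\in S^*$; comparing the coefficients of $x^{h_1}$ gives $t_1=ad+bc$, a sum of two nonzero elements of $S$, which contradicts $t_1\in S^\times=\atom_+(S)$ since $(S,+)$ is reduced. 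This conclusion genuinely requires the unit to sit in the central slot (it fails for $x^2+2x+1=(x+1)^2$), which is the structural constraint governing the decompositions below.

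If $s_0,s_1,s_2\in S^\times$, then the trinomial $f$ is monolithic by the point above, and $1\in\gcdd(\{s_0,s_1,s_2\})$ because some coefficient is a unit, so Lemma~\ref{lem31} yields that $f$ is irreducible. For the remaining cases I would split on $s_1$. If $s_1\notin S^\times$, write $s_1=u_1+v_1$ with $u_1\in S^\times$ and $v_1\in S^*$, and write $s_0=u_0+v_0$ with $u_0\in S^\times$ and $v_0\in S$; then
\[
f=[u_0x^{g_0}+v_1x^{g_1}]+[v_0x^{g_0}+u_1x^{g_1}+s_2x^{g_2}],
\]
where the first bracket is a binomial with unit coefficient $u_0$ (irreducible by Lemma~\ref{lem34}(1)) and the second bracket is either a binomial with unit coefficient $u_1$ (when $v_0=0$) or a trinomial whose middle coefficient $u_1$ is a unit, hence monolithic and, carrying a unit coefficient, irreducible by Lemma~\ref{lem31}. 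If $s_1\in S^\times$ then, since we are not in the exceptional case, some $s_j$ with $j\in\{0,2\}$ is a non-unit; writing $s_j=u_j+v_j$ with $u_j\in S^\times$ and $v_j\in S^*$ and placing $s_1x^{g_1}$ together with $v_jx^{g_j}$ in one summand and $u_jx^{g_j}$ with the remaining monomial in the other, one obtains $f$ as a sum of two binomials each carrying a unit coefficient, irreducible by Lemma~\ref{lem34}(1).

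The only real obstacle is the monolithicity bookkeeping: since trinomials — unlike binomials — need not be monolithic, the decomposition must always be chosen so that whichever summand is a genuine trinomial has its unit coefficient in the middle; once that is arranged, the rest is routine verification that the displayed summands add up to $f$ and are genuine (non-monomial) expressions in $S[G]$.
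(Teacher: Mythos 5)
Your proof is correct and follows essentially the same route as the paper: peel a unit off a coefficient (using the Furstenberg property and $\mathscr{A}_+(S)=S^\times$), observe that a trinomial whose middle coefficient is a unit admits no factorization into non-monomials because that coefficient would be $ad+bc$ with all four factors nonzero, and arrange each summand so any genuine trinomial has its unit in the central slot. The only differences are cosmetic — you spell out the monolithicity computation that the paper dismisses as "not hard to see," and in the case $s_1\in S^\times$ you split $f$ into two binomials where the paper uses a trinomial plus a binomial; both decompositions work.
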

\begin{proof}
If $s_1 \in S^{\times}$, then we claim that $f$ is irreducible. Suppose toward a contradiction that $f = pq$ for non-units $p,q \in S[G]$. Since $f$ is a trinomial and $s_1 \in S^{\times}$, both $p$ and $q$ are binomials. In this case, it is not hard to see that $s_1 = s' + s''$ for some $s', s'' \in S^*$, contradicting that $s_1 \in \atom_+(S)$. Hence, our claim follows. In particular, if $f$ is the sum of three units, then $f$ is irreducible.

For the rest of the proof, assume that $f$ is not the sum of three units. For each $i \in \llbracket 0,2 \rrbracket$, we can write $s_i = u_i + v_i$ with $u_i \in S^{\times}$ and $v_i \in S$ since $S$ is additively Furstenberg and $\atom_+(S)=S^\times$. We split our reasoning into the following two cases.

\noindent\textsc{Case $1$:} $s_1 \in S^{\times}$. In this case, either $v_0 \neq 0$ or $v_2 \neq 0$. Without loss of generality, suppose that $v_0 \neq 0$. We can then write
\[
    f = [v_0x^{g_0} + s_1x^{g_1} + v_2x^{g_2}] + [u_0x^{g_0} + u_2x^{g_2}],
\]
where the second summand between brackets is clearly irreducible and the first summand between brackets is either a trinomial (thus irreducible by the previous observation) or a binomial (thus irreducible as $s_1 \in S^{\times}$).

\noindent\textsc{Case $2$:} $s_1 \notin S^{\times}$. In this case, we can write
\[
    f = [v_0x^{g_0} + u_1x^{g_1} + s_2x^{g_2}] + [u_0x^{g_0} + v_1x^{g_1}]
\]
and, reasoning as above, it is not hard to show that both summands between brackets are irreducible.
\end{proof}

Now we are in a position to prove the main result of this section.

\begin{thm}\label{thm36}
    Let \( S \) be an additively reduced and additively Furstenberg semidomain and \( G \) a torsion-free (abelian) group. The following statements are equivalent.
    \begin{itemize}
        \item[(1)] $\atom_+(S)=S^\times$.
        \item[(2)] Every $f\in S[G]$ with $|\supp(f)|>1$ can be expressed as the sum of at most two irreducibles.
        \item[(3)] Every $f\in S[G]$ with $|\supp(f)|>1$ can be expressed as the sum of at most $n$ irreducibles for some $n \in \mathbb
        {N}_{>1}$.
    \end{itemize}
    Moreover, if any of the previous statements hold and $f\in S[G]$ is not of one of the following forms:
    \begin{itemize}
        \item[(a)] $f=s_0x^{g_0}+s_1x^{g_1}$, where either $s_0\in S^\times$ or $s_1\in S^\times$, or
        \item[(b)] $f=s_0x^{g_0}+s_1x^{g_1}+s_2x^{g_2}$, where either $s_0,s_1,s_2\in S^\times$,
    \end{itemize}
    then the polynomial expression $f$ can be written as the sum of exactly two irreducibles.
\end{thm}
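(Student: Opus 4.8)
The plan is to establish $(1)\Rightarrow(2)\Rightarrow(3)\Rightarrow(1)$ and to read the ``moreover'' clause off the proof of $(1)\Rightarrow(2)$. The implication $(2)\Rightarrow(3)$ is immediate with $n=2$. For $(3)\Rightarrow(1)$ I would argue contrapositively, first recording the auxiliary fact that $S^\times\subseteq\atom_+(S)$ always holds under the standing hypotheses: if some unit were not an additive atom, then $1$ would decompose as $1=b+c$ with $b,c\in S^*$, whence $s=sb+sc$ with $sb,sc\in S^*$ for every $s\in S^*$ (since $S$ embeds in a domain), so $(S,+)$ would be atomless, contradicting that it is Furstenberg and nonzero. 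Hence $\atom_+(S)\neq S^\times$ furnishes some $a\in\atom_+(S)\setminus S^\times$; I would then test the conclusion of $(3)$ on $f_k:=\sum_{i=0}^{k}a\,x^{g_i}$ for distinct $g_0>\dots>g_k$ in $G$ (possible as $G$ is infinite). Since $a$ is an additive atom and $(S,+)$ is reduced, in any representation $f_k=p_1+\dots+p_r$ with $p_j\in S[G]\setminus\{0\}$ the supports of the $p_j$ partition $\{g_0,\dots,g_k\}$ and every nonzero coefficient of each $p_j$ equals $a$; by Lemma~\ref{lem31} a summand with support of size $>1$ is reducible (its coefficient-gcd is $a\notin S^\times$), so any irreducible decomposition of $f_k$ would consist of exactly $k+1$ monomials $a\,x^{g_i}$. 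In particular $f_k$ is never a sum of fewer than $k+1$ irreducibles, so no single $n$ can work in $(3)$.

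The substance is $(1)\Rightarrow(2)$. Binomials and trinomials are handled by Lemmas~\ref{lem34} and~\ref{lem35}, which also single out the exceptional forms (a) and (b); so fix $f=\sum_{i=0}^{n}s_ix^{g_i}$ with $n\geq3$ and aim to write it as the sum of exactly two irreducibles. Because $S$ is additively Furstenberg with $\atom_+(S)=S^\times$, every coefficient decomposes as $s_i=u_i+v_i$ with $u_i\in S^\times$ and $v_i\in S$, where $v_i=0$ exactly when $s_i\in S^\times$. Now cut the exponents at the midpoint of $g_0$ and $g_n$: partition $\llbracket 1,n-1\rrbracket$ into $A=\{i:2g_i>g_0+g_n\}$, $B=\{i:2g_i<g_0+g_n\}$, and $C=\{i:2g_i=g_0+g_n\}$, with $|C|\leq1$ and (since $n-1\geq2$) at least one of $A,B$ nonempty; applying the semidomain automorphism $x^g\mapsto x^{-g}$, which interchanges $A$ and $B$, I may assume $B\neq\emptyset$. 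The generic split is
\[
f_1=u_0x^{g_0}+\sum_{i\in B\cup C}s_ix^{g_i}+v_nx^{g_n},\qquad f_2=v_0x^{g_0}+\sum_{i\in A}s_ix^{g_i}+u_nx^{g_n},
\]
with zero monomials discarded. Then $f_1$ carries the unit coefficient $u_0$ and $f_2$ the unit coefficient $u_n$, so each has coefficient-gcd $1$. Every exponent of $f_1$ other than $g_0$ is at most the midpoint, and every exponent of $f_2$ other than $g_n$ is strictly above the midpoint (the central index, if present, having been routed into $f_1$); hence $f_2$ is monolithic by Lemma~\ref{lem33}(1), and $f_1$ is monolithic by Lemma~\ref{lem32}(1) unless its second-highest exponent is the central one with lowest exponent $g_n$, in which case $\supp(f_1)$ contains $g_0$, $g_n$, the central exponent, and (as $B\neq\emptyset$) a further exponent, so $|\supp(f_1)|>3$ and Lemma~\ref{lem32}(2) applies. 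By Lemma~\ref{lem31}, $f_1$ and $f_2$ are irreducible and $f=f_1+f_2$ --- provided $|\supp(f_2)|\geq2$, i.e.\ $A\neq\emptyset$ or $v_0\neq0$.

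The one residual configuration is $A=\emptyset$ and $s_0\in S^\times$, where $f_2$ above collapses to a monomial. Here all of $g_1,\dots,g_{n-1}$ lie weakly below the midpoint, and I would instead peel off the binomial $f_2'=u_1x^{g_1}+u_nx^{g_n}$ --- irreducible by Lemma~\ref{lem34}(1) --- and set $f_1'=s_0x^{g_0}+v_1x^{g_1}+\sum_{i=2}^{n-1}s_ix^{g_i}+v_nx^{g_n}$ (again discarding zeros). The coefficient $s_0$ of $f_1'$ is a unit, $|\supp(f_1')|\geq n-1\geq2$, and the second-highest exponent of $f_1'$ is at most $g_1$ with $2g_1\leq g_0+g_n$; so $f_1'$ is monolithic by Lemma~\ref{lem32}(1), or by Lemma~\ref{lem32}(2) in the boundary subcase $2g_1=g_0+g_n$ with $g_n\in\supp(f_1')$, where $|\supp(f_1')|=n+1>3$. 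Thus $f_1'$ is irreducible and $f=f_1'+f_2'$. This settles every $f$ with $|\supp(f)|\geq4$ as a sum of exactly two irreducibles, completing $(1)\Rightarrow(2)$; together with Lemmas~\ref{lem34}(2) and~\ref{lem35}, which yield exactly two irreducibles precisely when $f$ avoids forms (a) and (b), it also gives the ``moreover'' clause.

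I expect the delicate part to be the bookkeeping inside $(1)\Rightarrow(2)$: verifying that the midpoint cut places the second-highest and second-lowest exponents of the two pieces on the intended sides, tracking when $v_0$ or $v_n$ vanishes (since a vanishing $v$ deletes an exponent from a piece and can shrink its support below the size threshold required by the ``part (2)'' halves of Lemmas~\ref{lem32} and~\ref{lem33}, forcing one back onto the strict ``part (1)'' version), correctly routing the lone central exponent, and confirming that $A=\emptyset$ together with $s_0\in S^\times$ is the only configuration the generic split misses. The boundary equalities at the midpoint and the placement of the central index are where I anticipate the argument being most fiddly; the remaining verifications are routine.
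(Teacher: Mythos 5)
Your proposal is correct and follows essentially the same route as the paper: the same $(3)\Rightarrow(1)$ test polynomials $\sum_i a x^{g_i}$, and the same midpoint split $u_0x^{g_0}+(\text{lower half})+v_nx^{g_n}$ versus $v_0x^{g_0}+(\text{upper half})+u_nx^{g_n}$ certified by Lemmas~\ref{lem31}--\ref{lem33}. The only differences are cosmetic bookkeeping --- you route the central exponent into the lower piece from the outset and patch the degenerate monomial case by peeling off $u_1x^{g_1}+u_nx^{g_n}$, whereas the paper excludes the central term from both pieces and reinserts it afterward --- and your verifications of these boundary cases are sound.
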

\begin{proof}
It is obvious that $(2)$ implies $(3)$. Next we focus on proving that $(3)$ implies $(1)$. Since $S$ is additively reduced and additively Furstenberg, we know that $\atom_{+}(S) \neq \emptyset$. This implies that $S^{\times} \subseteq \atom_{+}(S)$. Indeed, if $u \in S^{\times}$ and $u = s' + s''$ for some $s', s'' \in S^*$, then every element $s \in S^*$ could be written as the sum of two nonzero elements, given by $$s = su^{-1}u = su^{-1}s' + su^{-1}s''.$$ In other words, the set of additive atoms $\atom_+(S)$ would be empty, which is impossible. Now suppose toward a contradiction that there exists an element $s\in\atom_+(S)\setminus S^\times$. Observe that, for any $n\in\NN$, the polynomial expression $\sum_{i=0}^{n}sx^{g_i} \in S[G]$ cannot be written as the sum of $n$ irreducibles as each term $sx^{g_i}$ cannot be decomposed additively. This contradiction proves that our hypothesis is untenable. Hence $\atom_+(S) = S^\times$.

It remains to verify that $(1)$ implies $(2)$. Let $f=\sum_{i=0}^ns_ix^{g_i}\in S[G]$, where $s_i \in S^*$ and $g_i \in G$ for every $i \in\llbracket 0,n \rrbracket$. If $f$ is either a binomial or a trinomial, then our result follows from Lemma~\ref{lem34} or Lemma~\ref{lem35}, respectively. From now on we assume that $|\supp(f)|>3$. Scaling $f$ by an appropriate power of $x$, we may further assume that $g_n = 0$. Since the semidomain $S$ is additively Furstenberg, we can write $s_0=u_0+v_0$ and $s_n=u_n+v_n$ for some $u_0, u_n \in S^\times$ and $v_0,v_n\in S$. Now let us define the polynomial expression $f'=\sum_{i=0}^ns'_ix^{g_i}$, where 
\[
    s'_i=
         \begin{cases}
            v_0 & \text{if } i=0,\\
            s_i & \text{if } 2g_i>g_0,\\
            u_n & \text{if } i=n,\\
            0   & \text{otherwise.}
         \end{cases}
\]
By Lemma~\ref{lem33}, the polynomial expression $f'$ is monolithic, and since $s_n' \in S^{\times}$, we have that either $f'$ is a unit or $f'$ is an irreducible by virtue of Lemma~\ref{lem31}. In a similar manner, we define the polynomial expression $f''=\sum_{i=0}^ns''_ix^{g_i}$, where 
\[
    s''_i=
          \begin{cases}
            u_0 & \text{if } i=0,\\
            s_i & \text{if } 2g_i<g_0,\\
            v_n & \text{if } i=n,\\
            0 & \text{otherwise.}
          \end{cases}
\]
Again, by Lemma~\ref{lem32}, the polynomial expression $f''$ is monolithic, and since $s_0'' \in S^{\times}$, we have that either $f''$ is a unit or $f''$ is an irreducible by virtue of Lemma~\ref{lem31}. Since $|\supp(f)|>3$, we know that one of $f'$ and $f''$ is not a unit. We have two possible cases.

\noindent\textsc{Case $1$:} Either $f'$ or $f''$ is a unit. Without loss of generality, assume that $f'$ is a unit. In this case, we have $v_0 = 0$ and $2g_i\le g_0$ for all $i \in \llbracket 1,n - 1 \rrbracket$. We can then write 
\begin{equation*}
    f = \left[f' + s_1x^{g_1}\right] + \left[f'' - s_1x^{g_1}\right],
\end{equation*}
where the summands between brackets are clearly irreducible.

\noindent\textsc{Case $2$:} Neither $f'$ nor $f''$ is a unit. In this case, both $f'$ and $f''$ are irreducibles in $S[G]$. It is not hard to see that $f = f'+f''$ as long as there does not exist an index $j \in \llbracket 1,n - 1 \rrbracket$ such that $2g_j = g_0$. Suppose then that such an index $j \in \llbracket 1,n - 1 \rrbracket$ exists. If $|\supp(f')| \ge 3$ (resp., $|\supp(f'')| \ge 3$), then $f' + s_jx^{g_j}$ (resp., $f'' + s_jx^{g_j}$) is irreducible by Lemma~\ref{lem31} and Lemma~\ref{lem33} (resp., Lemma~\ref{lem32}), from which our argument concludes. So, let us assume that $|\supp(f')|=|\supp(f'')| = 2$. Note that if $v_0 \neq 0$ and $v_n \neq 0$, then $|\supp(f)| = 3$, which is impossible. On the other hand, if $v_0 = 0$ (resp., $v_n = 0$), then $f' + s_jx^{g_j}$ (resp., $f'' + s_jx^{g_j}$) is irreducible by Lemma~\ref{lem31} and Lemma~\ref{lem33} (resp., Lemma~\ref{lem32}).

It is easy to see that the second part of the theorem follows from Lemma~\ref{lem34} and Lemma~\ref{lem35}.
\mbox{}\qedhere
\end{proof}

In~\cite{kaplan2023goldbach}, the authors studied analogues of the Goldbach conjecture for polynomials with coefficients in an additively reduced and additively atomic semidomain. In Theorem~\ref{thm36}, we considered polynomial expressions with coefficients in an additively reduced and additively Furstenberg semidomain. We conclude this section by presenting an additively Furstenberg semidomain that is not additively atomic.

\begin{ex}
    Let $p$ and $q$ be prime numbers satisfying $\frac{q}{p}>\frac{1+\sqrt{5}}2$, and consider the semidomain 
    \[
        S_{pq} =\NN_0\left[\frac{q}{p+q},\frac{q^2}{p^2+pq}\right].
    \]
    Note that our bound on $\frac{q}{p}$ guarantees that $\frac{q}{p+q}<1<\frac{q^2}{p^2+pq}$. We start by proving that $\atom_+(S_{pq}) = \{(\frac{q}{p+q})^n \mid n\in\NN_0\}$. Observe that $$C = \left\{\left(\frac{q}{p + q}\right)^n\left(\frac{q^2}{p^2 + pq}\right)^m \;\bigg|\; n,m \in \mathbb{N}_0\right\}$$ is a generating set of the additive monoid $(S_{pq},+)$. Consequently, $\atom_+(S_{pq})\subseteq C$. Since
    \begin{equation}\label{eq: identity}
        \frac{q^2}{p^2+pq}=\left(\frac{q}{p+q}\right)\left(\frac{q^2}{p^2+pq}\right)+\left(\frac{q}{p+q}\right)^2\!,
    \end{equation}
     we can conclude that $\atom_+(S_{pq})\subseteq\{(\frac{q}{p+q})^n \mid n\in\NN_0\}$. Suppose toward a contradiction that $(\frac{q}{p + q})^k$ is not an additive atom for some $k \in \mathbb{N}_0$. Then we can write
    \begin{equation} \label{eq: finding atoms}
        \left(\frac{q}{p + q}\right)^k = \sum_{i = 0}^t c_i\left(\frac{q}{p + q}\right)^{n_i}\left(\frac{q^2}{p^2 + pq}\right)^{m_i}\!,
    \end{equation}
    where $t \in \mathbb{N}$, the coefficients $c_0, \ldots, c_t \in \mathbb{N}$, and the exponents $n_0, m_0, \ldots, n_t, m_t \in \mathbb{N}_0$. Since $p^2 + pq < q^2$, we have that $k \leq n_i$ for every $i \in \llbracket 0,t \rrbracket$. After clearing out denominators in Equation~\eqref{eq: finding atoms}, we obtain that either $q \mid p$ or $q \mid p + q$, a contradiction. Consequently, the semidomain $S_{pq}$ is not additively atomic. On the other hand, by raising the identity~\eqref{eq: identity} to sufficiently high powers, it is not hard to show that, for every $s \in S_{pq}^*$, there exists $t_s \in \mathbb{N}_0$ such that $(\frac{q}{p + q})^{t_s} \mid_{(S_{pq},+)} s$. In other words, the semidomain $S_{pq}$ is additively Furstenberg.   
\end{ex}

\smallskip

\section{A Weak Goldbach Theorem for Group Series Semidomains}
\smallskip
In this section, we present analogues of the weak Goldbach conjecture for group series semidomains \( S\sg \), where \( S \) is an additively reduced and additively Furstenberg semidomain and \( G \) is a torsion-free abelian group. To do so, we extend several key results from \cite[Section~4]{kaplan2023goldbach}.

As before, we start by establishing a relationship between the irreducible and monolithic concepts in the context of series expressions.

\begin{lemma}\label{lem41}
    Let \( S \) be an additively reduced semidomain and \( G \) a torsion-free (abelian) group. Then $f=\sum_{i=0}^\infty s_ix^{g_i} \in S\sg$ with $|\supp(f)|>1$ is irreducible if and only if $f$ is monolithic and $1\in\gcdd(\{s_i \mid i \in \mathbb{N}_0\})$.
\end{lemma}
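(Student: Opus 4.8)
The plan is to mimic the proof of Lemma~\ref{lem31} almost verbatim, since the only structural change is that $S\sg$ replaces $S[G]$; the key point is that the ``units are monomials'' description carries over and that, because $S$ is additively reduced, $\supp(p)\subseteq\supp(pq)$ still forces a monomial factor of $f$ to be a monomial. First I would establish the forward implication: assume $f=\sum_{i=0}^\infty s_ix^{g_i}\in S\sg$ with $|\supp(f)|>1$ is irreducible. If $f=pq$ with $p,q\in S\sg^*$, then irreducibility of $f$ forces one of $p,q$ to lie in $S\sg^\times$, and since $S\sg^\times=\{sx^g\mid s\in S^\times,\ g\in G\}$, that factor is a monomial; hence $f$ is monolithic. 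For the gcd condition, let $t\in S^*$ be a common divisor of $\{s_i\mid i\in\NN_0\}$, say $s_i=tr_i$ for each $i$; then $f=t\bigl(\sum_{i=0}^\infty r_ix^{g_i}\bigr)$, which exhibits a factorization of $f$ in $S\sg^*$ (the constant $t$ is a legitimate element of $S\sg$), so irreducibility forces $t\in S^\times$. Thus every common divisor of the coefficients is a unit, i.e.\ $1\in\gcdd(\{s_i\mid i\in\NN_0\})$.

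For the converse, suppose $f$ is monolithic with $1\in\gcdd(\{s_i\mid i\in\NN_0\})$, and write $f=pq$ with $p,q\in S\sg^*$. By monolithicity, one of the factors is a monomial, say $p=sx^g$ with $s\in S^*$ and $g\in G$. Then $s$ is a common divisor in $S^*$ of all the coefficients of $f$: indeed, comparing coefficients in $f=(sx^g)q$ shows each $s_i$ equals $s$ times a coefficient of $q$ (which lies in $S$, hence in $S^*$ once we discard the zero cases). Since $1\in\gcdd(\{s_i\mid i\in\NN_0\})$, we get $s\in S^\times$, so $p=sx^g\in S\sg^\times$; therefore any factorization of $f$ has a unit factor, and since $f$ is not a unit (as $|\supp(f)|>1$), $f$ is irreducible.

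I do not anticipate a serious obstacle, but the one point that needs a little care, as opposed to the polynomial case, is the bookkeeping with infinite supports: when I write $f=t\bigl(\sum_i r_ix^{g_i}\bigr)$ or $f=(sx^g)q$, I should make sure the exponent set $\{g+g_i\}$ is again a valid ascending sequence with the ``trailing zeros stay zero'' convention, and that multiplication by a monomial or by a constant is well-defined in $S\sg$ — all of which is immediate from the definition of $S\sg$ and the order-compatibility of $\le$ on the torsion-free group $G$ granted by Levi's theorem. A second minor subtlety is that I want $s\in S^*$ rather than $s=0$; this is automatic because $p\in S\sg^*$ means $p\neq 0$, so the single coefficient $s$ of the monomial $p$ is nonzero. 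With those remarks, the proof is a routine transcription of Lemma~\ref{lem31}.
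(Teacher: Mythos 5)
Your proposal is correct and follows essentially the same route as the paper's own proof: monolithicity comes from the description of $S\sg^\times$ as monomials with unit coefficients, and the gcd condition is extracted by factoring out a common divisor of the coefficients (resp.\ reading off the monomial factor's coefficient as a common divisor). The extra bookkeeping you flag about infinite supports and monomial multiplication is indeed routine, and the paper handles it with the same level of brevity.
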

\begin{proof}
Suppose that $f$ is irreducible. Now if $f = pq$ for some $p,q \in S\sg^*$, then either $p$ or $q$ is a multiplicative unit and thus a monomial. Hence $f$ is monolithic. On the other hand, take $t \in S^*$ to be a common divisor of $\{s_i \mid i \in \mathbb{N}_0\}$. Since $f$ is irreducible, we obtain that $t \in S^{\times}$, which concludes the proof of the direct implication. Conversely, suppose that $f$ is monolithic and $1\in\gcdd(\{s_i \mid i \in \mathbb{N}_0\})$. Assume we can write $f = pq$ for some $p,q \in S\sg$. Since $f$ is monolithic, we may assume that $p = sx^g$ for some $s \in S^*$ and $g \in G$. Note that $s \in S^{\times}$ because $1\in\gcdd(\{s_i \mid i \in \mathbb{N}_0\})$. Consequently, the factor $p$ is necessarily a unit which, in turn, implies that $f$ is irreducible.
\end{proof}

The next two criteria for irreducibility will be crucial in proving the main result of this section.

\begin{lemma}\label{lem42}
    Let $S$ be an additively reduced semidomain and \( G \) a torsion-free (abelian) group. If $f=\sum_{i=0}^\infty s_ix^{g_i}\in S\sg^*$ satisfies that $g_1-g_0<g_{i+1}-g_{i}$ for every $i \in \mathbb{N}$, then $f$ is monolithic.
\end{lemma}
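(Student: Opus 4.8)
The plan is to prove the contrapositive: assuming $f$ is \emph{not} monolithic, I will show that the first gap $g_1-g_0$ is not strictly smaller than every later gap. So suppose $f=pq$ with $p,q\in S\sg$ neither of which is a monomial (we may assume $|\supp(f)|\ge 2$, so that $g_1$ is meaningful). Replacing $p$ by $x^{-\min\supp(p)}p$, $q$ by $x^{-\min\supp(q)}q$, and $f$ by the corresponding unit multiple $x^{-\min\supp(p)-\min\supp(q)}f$, I may assume $\min\supp(p)=\min\supp(q)=0$; this merely translates $\supp(f)$, so it preserves both the monolithicity question and the gap hypothesis (all gaps are unchanged). Write $p=a_0+a_1x^{e_1}+\cdots$ and $q=b_0+b_1x^{f_1}+\cdots$ with $a_0,a_1,b_0,b_1\in S^*$, where $e_1,f_1>0$ are the second-smallest exponents of $p$ and $q$ (they exist since $p,q$ are not monomials); after possibly swapping $p$ and $q$, assume $e_1\le f_1$.

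The workhorse is that additive reducedness of $S$ forbids cancellation: a finite sum of nonzero elements of $S$ is nonzero, since $a+b=0$ would make $a$ an additive unit and hence $a=0$. Combined with the fact that a product of nonzero elements of a semidomain is nonzero, this gives that the coefficient of $x^g$ in $pq$ — a finite sum of products $a_jb_k$ over pairs with $e_j+f_k=g$ — is nonzero as soon as one pair contributes. From this I would read off: (i) $g_0=0$, since $a_0b_0\ne 0$ and every element of $\supp(p)+\supp(q)$ is $\ge 0$; (ii) $g_1=e_1$, since $a_1b_0$ contributes to $x^{e_1}$ while every positive element of $\supp(p)+\supp(q)$ is $\ge\min(e_1,f_1)=e_1$; (iii) $f_1\in\supp(f)$, since $a_0b_1$ contributes to $x^{f_1}$; and (iv) $e_1+f_1\in\supp(f)$, since $a_1b_1$ contributes to $x^{e_1+f_1}$.

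Then the contradiction falls out. Let $m$ be the index with $g_m=f_1$, which exists by (iii); since $f_1\ge e_1=g_1$ by (ii), we have $m\ge 1$. By (iv), $e_1+f_1\in\supp(f)$ and $e_1+f_1>f_1=g_m$, so the successor exponent satisfies $g_{m+1}\le e_1+f_1$, whence $g_{m+1}-g_m\le e_1=g_1-g_0$ for an index $m\ge 1$ — contradicting the hypothesis that $g_1-g_0<g_{i+1}-g_i$ for every $i\in\mathbb{N}$. Hence $f$ is monolithic. I expect the only delicate points to be the no-cancellation step — this is exactly where additive reducedness is indispensable, since otherwise the coefficients in (i)--(iv) could vanish — and the bookkeeping that converts ``$\supp(f)$ contains two exponents at distance $\le e_1$, both $\ge g_1$'' into a genuine consecutive pair $g_m,g_{m+1}$ with $m\ge 1$; choosing the exponents $f_1$ and $e_1+f_1$ (rather than trying to force, say, $2e_1\in\supp(f)$, which can fail) is what makes the argument come out uniformly whether $e_1=f_1$ or $e_1<f_1$.
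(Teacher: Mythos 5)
Your proof is correct and takes essentially the same approach as the paper's: both argue by contradiction from a factorization $f=pq$ into non-monomials, use additive reducedness to get $\supp(f)=\supp(p)+\supp(q)$ and identify $g_1-g_0$ with the smaller of the two initial gaps, and then exhibit two support elements beyond $g_0$ at distance equal to that gap (your pair $\{f_1,\,e_1+f_1\}$ is, up to relabeling, the paper's $\{r_1+t_0,\,r_1+t_1\}$), contradicting the hypothesis. The only differences are cosmetic: you normalize the minimal exponents to $0$ and bound the successor of $f_1$, whereas the paper bounds the predecessor of $r_1+t_1$.
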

\begin{proof}
If $|\supp(f)| < 3$, then our result follows trivially. For the rest of the proof, we assume that none of $s_0$, $s_1$, and $s_2$ equal $0$. By way of contradiction, suppose that $f$ is not monolithic. Then there exist $p,q\in S\sg$ such that $f=pq$ and neither $p$ nor $q$ is a monomial. Write $p=\sum_{i=0}^\infty a_ix^{r_i}$ and $q=\sum_{i=0}^\infty b_ix^{t_i}$ with $a_0,a_1,b_0,b_1\in S^*$. Assume without loss of generality that $r_1-r_0\ge t_1-t_0$. Clearly, the equality $g_0=r_0+t_0$ holds. Since $S$ is additively reduced and $r_0 + t_1 \leq r_1 + t_0$, we have that $g_1 = r_0 + t_1$. Observe that $\{r_1+t_0,r_1+t_1\}\subseteq\supp(f)$ and $t_0<t_1$, implying the existence of some $j\in\NN$ with $g_j\ge r_1+t_0$ and $g_{j+1}=r_1+t_1$. This implies that $g_{j+1}-g_j\le t_1-t_0=g_1-g_0$, contradicting our assumption that $g_1-g_0<g_{i+1}-g_{i}$ for all $i\in\NN$. Hence $f$ is monolithic.
\end{proof}

\begin{lemma}\label{lem43}
    Let $S$ be an additively reduced semidomain and \( G \) a torsion-free (abelian) group. Consider the series expression $f=\sum_{i=0}^\infty s_ix^{g_i}\in S\sg$ with $s_i\in S^*$ for all $i\in\NN_0$. If the sequence $(g_{N+i+1}-g_{N+i})_{i\in\NN_0}$ is strictly increasing and unbounded above for some $N\in\NN_0$, then $f$ is monolithic.
\end{lemma}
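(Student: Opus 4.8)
The plan is to reduce to Lemma~\ref{lem42} by scaling away the first $N$ terms of the support. Concretely, suppose toward a contradiction that $f$ is not monolithic, so that $f = pq$ for some $p, q \in S\sg$, neither of which is a monomial. As in the proof of Lemma~\ref{lem42}, I would write $p = \sum_{i=0}^\infty a_i x^{r_i}$ and $q = \sum_{i=0}^\infty b_i x^{t_i}$ with $a_0, a_1, b_0, b_1 \in S^*$, and I may assume $g_0 = r_0 + t_0$ (the bottom exponents multiply to the bottom exponent of $f$, using that $S$ is additively reduced). Assume without loss of generality that $r_1 - r_0 \ge t_1 - t_0$. Just as in Lemma~\ref{lem42}, since $S$ is additively reduced and $r_0 + t_1 \le r_1 + t_0$, the second-smallest exponent of $f$ is $g_1 = r_0 + t_1$, so the gap $g_1 - g_0 = t_1 - t_0$ records the smallest nonzero gap appearing in $q$.

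The key observation is then the following: the exponents $r_1 + t_0$ and $r_1 + t_1$ both lie in $\supp(f)$, with $r_1 + t_0 < r_1 + t_1$ and no element of $\supp(f)$ of the form $r_1 + (\text{something in }\supp(q))$ strictly between them (again because $t_1 - t_0$ is the minimal gap in $q$). Hence there is some index $j$ with $g_j \ge r_1 + t_0$ and $g_{j+1} = r_1 + t_1$, giving $g_{j+1} - g_j \le t_1 - t_0 = g_1 - g_0$. But this bounds a gap arbitrarily far out in the support — indeed, since $p$ is not a monomial, we may repeat the argument using higher exponents $r_k$ of $p$ (or simply note that $r_1$ can be taken large enough, or that for every $M$ there is a gap $g_{j+1} - g_j \le g_1 - g_0$ with $j \ge M$ because $r_1 + t_1 \to \infty$ would require... ) — more carefully, one translates: replacing $f$ by $x^{-g_0} f$ does not change monolithicity, and after this reduction the hypothesis says the gap sequence is eventually strictly increasing and unbounded, so in particular all but finitely many gaps exceed any fixed bound, contradicting the existence of arbitrarily-far-out gaps bounded by $g_1 - g_0$.

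The cleanest packaging, which I would actually write, is: let $g_1' - g_0'$ be the minimal gap in $q$; by the unboundedness and eventual monotonicity of $(g_{N+i+1} - g_{N+i})_{i \in \NN_0}$, choose $k$ so large that $g_{k+1} - g_k > g_1' - g_0'$ for all $k \ge N'$ with $N'$ chosen appropriately. Since $p$ is not a monomial it has exponents $r_0 < r_1 < \cdots$ going to infinity, so pick $r_m$ with $r_m + t_0 > g_{N'}$; then $r_m + t_0$ and $r_m + t_1$ are consecutive-or-nearly-consecutive elements of $\supp(f)$ lying beyond index $N'$, forcing a gap there of size $\le t_1 - t_0 = g_1' - g_0'$, the desired contradiction. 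The main obstacle — and the step deserving the most care — is verifying that $g_j$ and $g_{j+1} = r_m + t_1$ really are \emph{consecutive} in $\supp(f)$, i.e. that nothing in $\supp(f)$ lies strictly between $r_m + t_0$ and $r_m + t_1$; this needs the minimality of $t_1 - t_0$ among gaps of $q$ together with the fact that every element of $\supp(f)$ near $r_m + t_1$ is of the form $r + t$ with $r \in \supp(p)$, $t \in \supp(q)$, and one must rule out contributions with $r > r_m$ by choosing $r_m$ via $r_{m+1} - r_m$ also large, exactly the symmetric use of the hypothesis. I would mirror the bookkeeping of Lemma~\ref{lem42} as closely as possible to keep this transparent.
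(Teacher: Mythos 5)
Your overall strategy coincides with the paper's: exhibit two elements of $\supp(f)$ lying far out in the support whose difference equals the first gap $t_1-t_0$ of one factor, and contradict the fact that all sufficiently late gaps of $f$ exceed $t_1-t_0$ (which uses both the unboundedness and the eventual strict monotonicity of the gap sequence). There is, however, one genuine (though easily repaired) gap: you justify choosing $r_m$ arbitrarily large by asserting that ``since $p$ is not a monomial it has exponents $r_0<r_1<\cdots$ going to infinity.'' A non-monomial need only have two exponents. What you actually need is that $\supp(p)$ is infinite (indeed unbounded above, for your version of the argument), and this must be arranged: since every $s_i\in S^*$ and the gaps of $f$ are unbounded, $\supp(f)$ is infinite and unbounded above; because $S$ is additively reduced, $\supp(f)=\supp(p)+\supp(q)$, so at least one factor has unbounded support, and you should relabel so that this factor is $p$ (the argument only uses two exponents $t_0<t_1$ of the other factor, so nothing is lost). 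This is exactly the role of the paper's sentence ``we may assume that $p$ is not a polynomial expression.''

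Separately, the step you single out as ``the main obstacle'' --- verifying that $r_m+t_0$ and $r_m+t_1$ are consecutive in $\supp(f)$ --- is not needed, and your proposed fix (also making $r_{m+1}-r_m$ large) would not obviously secure it anyway, since an exponent $r_{m'}+t_k$ with $m'<m$ and $k$ large could still fall strictly between them. All the argument requires is the weaker statement you in fact already wrote: if $g_j=r_m+t_0$, then $r_m+t_1\in\supp(f)$ and exceeds $g_j$, so $g_{j+1}\le r_m+t_1$ and hence $g_{j+1}-g_j\le t_1-t_0$; since $g_j>g_{N'}$ forces $j>N'$, this contradicts the lower bound on late gaps. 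With the relabeling above and this simplification, your proof is the paper's proof.
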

\begin{proof}
Suppose towards a contradiction that $f$ is not monolithic. Then there exist $p,q\in S\sg$ such that $f=pq$ and neither $p$ nor $q$ is a monomial. Write $p=\sum_{i=0}^\infty a_ix^{r_i}$ and $q=\sum_{i=0}^\infty b_ix^{t_i}$, where $a_0,a_1,b_0,b_1\in S^*$. Since $f$ is not a polynomial expression, we may assume that $p$ is not a polynomial expression (i.e., $a_i \in S^*$ for every $i \in \NN_0$). For some sufficiently large $M \in \NN_{>N}$, we have that $g_{M+1}-g_M>t_1-t_0$. Since $S$ is additively reduced, the inclusion $\{r_{M+1}+t_0,r_{M+1}+t_1\}\subseteq\supp(f)$ holds. As a consequence, there exists $j\in\NN_0$ such that $g_j=r_{M+1}+t_0$ and $g_{j+1}\le r_{M+1}+t_1$ which, in turn, implies that $g_{j+1}-g_j\le t_1-t_0<g_{M+1}-g_M$. Since the sequence $(g_{N+i+1}-g_{N+i})_{i\in\NN_0}$ is strictly increasing, we get $j<M$. However, observe that $g_j=r_{M+1}+t_0$ is greater than $r_i+t_0$ for all $i\in \llbracket 0,M \rrbracket$, which implies $j>M$, a contradiction. Therefore, the series expression $f$ is monolithic.
\end{proof}

The next lemma establishes a key property of torsion-free groups in which every bounded-below subset is well-ordered. This property is crucial for proving the main result of this section.

\begin{lemma}\label{prop44}
    Let \( G \) be a torsion-free (abelian) group in which every subset that is bounded below is well-ordered. Then, for any strictly increasing sequence \((g_i)_{i \in \mathbb{N}_0}\) in \( G \), there exists a sequence \((n_i)_{i \in \mathbb{N}_0}\) of positive integers such that \((g_{n_{i+1}} - g_{n_i})_{i \in \mathbb{N}_0}\) is also strictly increasing.
\end{lemma}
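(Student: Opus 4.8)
The plan is to argue in two steps: first, use the well-ordering hypothesis to show that \emph{every} strictly increasing sequence in $G$ is unbounded above; second, build the subsequence $(g_{n_i})$ by a greedy recursion that exploits this unboundedness.

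\textbf{Step 1 (Unboundedness).} The hypothesis that every bounded-below subset of $G$ is well-ordered is equivalent to the statement that $G$ contains no infinite strictly decreasing sequence that is bounded below: if $(a_i)_{i\in\mathbb{N}_0}$ were such a sequence, then the set $\{a_i \mid i\in\mathbb{N}_0\}$ would be bounded below, hence well-ordered, hence would have a least element $a_k$, contradicting $a_{k+1}<a_k$. Now suppose, toward a contradiction, that the given strictly increasing sequence $(g_i)$ is bounded above by some $B\in G$. Using the compatibility of $(\le)$ with the group operation, the sequence $(B-g_i)_{i\in\mathbb{N}_0}$ would then be strictly decreasing and bounded below by $0$ --- impossible. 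Hence $(g_i)$ is unbounded above; that is, for every $c\in G$ there exists an index $m\in\mathbb{N}_0$ with $g_m>c$.

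\textbf{Step 2 (Greedy construction).} Define $(n_i)_{i\in\mathbb{N}_0}$ recursively: put $n_0=1$ and $n_1=2$, and suppose $n_0<n_1<\dots<n_i$ have been chosen (with $i\ge 1$) so that $g_{n_1}-g_{n_0}<\dots<g_{n_i}-g_{n_{i-1}}$. Let $D:=g_{n_i}-g_{n_{i-1}}$, so $D>0$, and use Step~1 to pick $m\in\mathbb{N}_0$ with $g_m>g_{n_i}+D$; set $n_{i+1}:=m$. Since $g_{n_{i+1}}>g_{n_i}+D>g_{n_i}$ and $(g_j)$ is strictly increasing, it follows that $n_{i+1}>n_i$, so the $n_i$ are distinct positive integers; and subtracting $g_{n_i}$ from the displayed inequality gives $g_{n_{i+1}}-g_{n_i}>D=g_{n_i}-g_{n_{i-1}}$, preserving the strict increase of the consecutive differences. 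Carrying out this recursion yields a sequence $(n_i)_{i\in\mathbb{N}_0}$ of positive integers for which $(g_{n_{i+1}}-g_{n_i})_{i\in\mathbb{N}_0}$ is strictly increasing, as desired.

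The only genuine obstacle is Step~1: recognizing that the well-ordering hypothesis is precisely what forbids a strictly increasing sequence from being bounded above, so that the recursion in Step~2 always has room to continue; after that, Step~2 is a routine greedy argument. It is worth noting that the hypothesis cannot be dropped: in $\mathbb{Q}$ the strictly increasing sequence $g_i=1-2^{-i}$ admits no such subsequence, since the consecutive differences of any subsequence would form an infinite family of positive rationals with bounded partial sums and hence cannot be strictly increasing.
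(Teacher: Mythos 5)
Your proof is correct and follows essentially the same route as the paper: a greedy inductive construction of the indices $n_i$, where the existence of the next index is guaranteed by observing that otherwise one would obtain a bounded-below subset of $G$ with no least element. The only difference is organizational --- you factor this observation out as a standalone unboundedness claim applied once, whereas the paper invokes the well-ordering hypothesis on a tailored set $\{2g_{n_k}-g_{n_{k-1}}-g_\ell \mid \ell > n_k\}$ at each inductive step --- which does not change the substance of the argument.
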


\begin{proof}
We start the construction of the sequence \((n_i)_{i \in \mathbb{N}_0}\) by setting $n_0 \coloneqq 0$ and $n_1 \coloneqq 1$. Suppose that, for some $k \in \NN$, we have defined $n_0, \ldots, n_k$ such that the finite sequence $(g_{n_{i + 1}} - g_{n_i})_{i \in \llbracket 0, k - 1 \rrbracket}$ is strictly increasing. Note that there must exist some $M \in \NN_{>n_k}$ such that $g_M - g_{n_k} > g_{n_k} - g_{n_{k - 1}}$. Indeed, if not, the set $\{2g_{n_k} - g_{n_{k - 1}} - g_{\ell} \mid \ell \in \NN_{>n_k}\}$, which is bounded below by $0$, would not contain a minimal element. Now set $n_{k + 1} \coloneqq M$. Clearly, the sequence $(g_{n_{i + 1}} - g_{n_i})_{i \in \llbracket 0, k \rrbracket}$ is strictly increasing. An inductive argument shows that there exists a sequence \((n_i)_{i \in \mathbb{N}_0}\) of positive integers such that \((g_{n_{i+1}} - g_{n_i})_{i \in \mathbb{N}_0}\) is strictly increasing.
\end{proof}

By combining Lemma~\ref{lem43} and Lemma~\ref{prop44}, we arrive at the following monolithicity criterion.

\begin{cor}\label{lem43special}
    Let $S$ be an additively reduced semidomain and \( G \) a torsion-free (abelian) group in which every subset that is bounded below is well-ordered. Consider the series expression $f=\sum_{i=0}^\infty s_ix^{g_i}\in S\sg$ with $s_i\in S^*$ for all $i\in\NN_0$. If the sequence $(g_{N+i+1}-g_{N+i})_{i\in\NN_0}$ is strictly increasing for some $N\in\NN_0$, then $f$ is monolithic.
\end{cor}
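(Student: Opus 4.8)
The plan is to deduce the corollary directly from Lemma~\ref{lem43} by observing that, under the standing hypothesis on $G$, the condition imposed in the corollary is in fact equivalent to the one required by Lemma~\ref{lem43}. Lemma~\ref{lem43} asks that $(g_{N+i+1}-g_{N+i})_{i\in\NN_0}$ be strictly increasing \emph{and} unbounded above, whereas here we are only given that it is strictly increasing; so the whole content of the proof is to show that, in a torsion-free group in which every bounded-below subset is well-ordered, a strictly increasing sequence is automatically unbounded above. Writing $d_i \coloneqq g_{N+i+1}-g_{N+i}$, once we know $(d_i)_{i\in\NN_0}$ is unbounded above we simply invoke Lemma~\ref{lem43} with the same $N$ to conclude that $f$ is monolithic.

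For the boundedness claim, suppose toward a contradiction that $d_i \le c$ for all $i\in\NN_0$ and some $c\in G$. Then $A \coloneqq \{c - d_i \mid i \in \NN_0\}$ is a subset of $G$ that is bounded below by $0$, hence well-ordered by hypothesis; but since $(d_i)_{i\in\NN_0}$ is strictly increasing, $(c - d_i)_{i\in\NN_0}$ is an infinite strictly decreasing sequence in $A$, contradicting the fact that a well-ordered set admits no infinite strictly descending chain. This is exactly the descending-chain argument that already appears inside the proof of Lemma~\ref{prop44} (there applied to the set $\{2g_{n_k} - g_{n_{k-1}} - g_\ell \mid \ell \in \NN_{>n_k}\}$), which is why the corollary is obtained by combining Lemma~\ref{lem43} with the method of Lemma~\ref{prop44}.

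I expect the only real obstacle to be using the well-orderedness hypothesis in the right form: one must translate ``every bounded-below subset of $G$ is well-ordered'' into ``$G$ has no bounded-below infinite strictly decreasing sequence'', and then transport this along the order-reversing bijection $x \mapsto c - x$ to rule out bounded strictly increasing sequences. Once that translation is in place everything else is routine bookkeeping: fix the shift $N$ supplied by the hypothesis, note that strict monotonicity of $(d_i)_{i\in\NN_0}$ is exactly what is assumed, check unboundedness as above, and apply Lemma~\ref{lem43} verbatim.
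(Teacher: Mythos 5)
Your proposal is correct and follows essentially the same route as the paper: both reduce the corollary to Lemma~\ref{lem43} by showing that a bounded, strictly increasing sequence of differences would yield the bounded-below, strictly decreasing set $\{c - d_i \mid i \in \NN_0\}$ with no minimum, contradicting well-orderedness. Your write-up is in fact slightly cleaner than the paper's, which contains an apparent sign typo in the displayed set.
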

\begin{proof}
    It suffices to show that the sequence $(g_{N+i+1}-g_{N+i})_{i\in\NN_0}$ is unbounded above. Suppose by way of contradiction that there exists $g \in G$ such that $g_{N+i+1}-g_{N+i} \leq g$ for every $i \in \NN_0$. Since $(g_{N+i+1}-g_{N+i})_{i\in\NN_0}$ is increasing, it follows that $\{g-g_{N+i-1}+g_{N+i}\mid i\in\NN_0\}$ is bounded below but is not well-ordered, a contradiction. Our result follows from Lemma~\ref{lem43}.
\end{proof}

We are now ready to prove the main result of this section.

\begin{thm}\label{thm45} Let $S$ be an additively reduced semidomain and \( G \) a torsion-free (abelian) group in which every subset that is bounded below is well-ordered. The following statements are equivalent.
\smallskip
    \begin{itemize}
        \item[(1)] $\atom_+(S)=S^\times$.
        \item[(2)] Every $f\in S\sg$ with $|\supp(f)|>1$ can be expressed as the sum of at most three irreducibles.
        \item[(3)] Every $f\in S\sg$ with $|\supp(f)|>1$ can be expressed as the sum of at most $n$ irreducibles for some $n \in \NN_{>2}$.
    \end{itemize}
\end{thm}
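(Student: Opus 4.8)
The plan is to establish the cycle $(2)\Rightarrow(3)\Rightarrow(1)\Rightarrow(2)$, the first implication being immediate (take $n=3$).

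For $(3)\Rightarrow(1)$, I would follow the template of the proof of Theorem~\ref{thm36}. Using that $S$ is additively reduced and additively Furstenberg, one has $\atom_+(S)\neq\emptyset$, which forces $S^\times\subseteq\atom_+(S)$: if a unit $u$ satisfied $u=s'+s''$ with $s',s''\in S^*$, then every $s\in S^*$ would split as $s=su^{-1}s'+su^{-1}s''$ into two nonzero summands, so $\atom_+(S)$ would be empty. For the reverse inclusion, suppose toward a contradiction that $s\in\atom_+(S)\setminus S^\times$, fix a nonzero $g\in G$, and look at $h_k=\sum_{i=0}^{k}sx^{ig}\in S\sg$ for $k\ge n$. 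Since $S$ is additively reduced and $s$ is an additive atom, any representation $h_k=f_1+\cdots+f_m$ has each $\supp(f_j)\subseteq\{0,g,\dots,kg\}$ and, at every exponent, exactly one $f_j$ carries the coefficient $s$ while the rest carry $0$; thus the $f_j$ partition the monomials $sx^{ig}$. A piece with at least two terms has all coefficients equal to $s$, so $1$ is not among the greatest common divisors of its coefficients and it is not irreducible by Lemma~\ref{lem41}; a one-term piece $sx^{ig}$ is a monomial, which is irreducible only when $s$ is a multiplicative atom, in which case $m=k+1>n$, contradicting $(3)$; and if $s$ is not a multiplicative atom, then $sx^{ig}$ is reducible and $h_k$ has no representation as a sum of irreducibles at all. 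Every case contradicts $(3)$, so $\atom_+(S)=S^\times$.

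The heart of the theorem is $(1)\Rightarrow(2)$. If $\supp(f)$ is finite then $f\in S[G]$, and because $S[G]$ is a divisor-closed submonoid of $S\sg$, Theorem~\ref{thm36} expresses $f$ as a sum of at most two elements irreducible in $S[G]$, which remain irreducible in $S\sg$. So assume $f=\sum_{i\ge0}s_ix^{g_i}$ has infinite support with every $s_i\in S^*$. Since the bounded-below subsets of $G$ are well-ordered, Lemma~\ref{prop44} lets us thin $(g_i)$ to subsequences along which the consecutive gaps strictly increase; I would use this to break $\supp(f)$ into a bounded number of exponent sets, each of which---as the support of the associated partial sum of $f$---satisfies one of the monolithicity certificates, namely ``the first gap is strictly smallest'' (Lemma~\ref{lem42}) or ``the gaps are eventually strictly increasing'' (Corollary~\ref{lem43special}). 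On each partial sum I would then replace a single coefficient $s_i$ by the unit summand $u_i$ of a decomposition $s_i=u_i+v_i$ furnished by the additively Furstenberg hypothesis together with $\atom_+(S)=S^\times$; the resulting expression is monolithic and has $1$ among the greatest common divisors of its coefficients, hence is irreducible by Lemma~\ref{lem41}. The discarded tails $v_i$ are gathered into one further summand which one arranges to be a short polynomial expression, disposed of via Lemma~\ref{lem34} and Lemma~\ref{lem35} or absorbed, keeping the total at most three.

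The delicate part, which I expect to consume most of the work, is precisely this last decomposition. One has to simultaneously (i) cover $\supp(f)$ by at most three exponent sets each carrying a certificate from Lemma~\ref{lem42} or Corollary~\ref{lem43special}---genuinely constrained when $\supp(f)$ has bounded gaps, since then no single thinned subsequence captures more than a sparse portion of $\supp(f)$---and (ii) ensure each surviving non-unit summand contains a unit coefficient while the leftover stays small enough (a binomial, a trinomial, or a unit) to fit inside the budget of three irreducibles. Arranging the coefficient-splitting so that no error term is forced to be a monomial with a non-atom coefficient, and so that the count closes at three rather than four, is exactly where the argument must follow, and adapt to the group setting, the bookkeeping in \cite[Section~4]{kaplan2023goldbach}.
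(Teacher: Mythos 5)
Your reduction $(2)\Rightarrow(3)\Rightarrow(1)$ is sound and essentially the paper's argument (your use of finite truncations $h_k$ with $k\ge n$ in place of the infinite series $\sum_{i\ge 0}sx^{g_i}$ is an immaterial variation, and is legitimate because additive reducedness forces every summand of a polynomial expression to again be a polynomial expression). The problem is $(1)\Rightarrow(2)$: what you have written there is a strategy statement, not a proof, and you say so yourself (``is exactly where the argument must follow\ldots the bookkeeping in \cite{kaplan2023goldbach}''). That deferred step is the entire content of the theorem, so the proposal has a genuine gap. Moreover, the strategy as you describe it --- cover $\supp(f)$ by at most three exponent sets ``each of which satisfies one of the monolithicity certificates,'' obtained by thinning via Lemma~\ref{prop44} --- cannot work if taken literally: a set whose consecutive gaps are eventually strictly increasing has density $O(\sqrt{N})$, so when $\supp(f)$ has bounded gaps (e.g.\ an arithmetic progression) no finite union of such thinned subsequences covers it. You correctly sense this obstruction but do not resolve it.

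The missing idea is that Lemma~\ref{lem42} does \emph{not} require sparseness: it only requires the \emph{first} gap of a summand to be strictly smaller than all of that summand's later gaps, and such a summand may carry a positive-density portion of $\supp(f)$. The paper exploits this by setting $\delta=\min\{g_{i+1}-g_i\}$ (which exists because bounded-below subsets of $G$ are well-ordered) and $\Delta=\{j: g_{j+1}-g_j=\delta\}$, then splitting on whether $\Delta$ is finite or infinite. If $\Delta$ is finite, one summand $p$ is anchored by a pair of exponents realizing $\delta$ and then continues only past $\max\Delta$, so every later gap of $p$ exceeds $\delta$ and Lemma~\ref{lem42} applies; the complementary summand $q$ is arranged, via Lemma~\ref{prop44}, to have eventually strictly increasing gaps so that Corollary~\ref{lem43special} applies; a unit/non-unit coefficient swap at two late exponents makes $1$ a gcd of the coefficients of each piece, and Lemma~\ref{lem41} gives \emph{two} irreducibles. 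If $\Delta$ is infinite, the dense leftover is interleaved into two pieces $p'$ and $p''$, each fronted by its own minimal-gap pair and thereafter taking every other remaining exponent, so each later gap spans at least two consecutive gaps of $f$ and hence is at least $2\delta>\delta$; together with the thinned $q$ this gives \emph{three} irreducibles. Without this $\delta$/$\Delta$ dichotomy and the anchoring trick, your items (i) and (ii) remain unexecuted, so the proof of $(1)\Rightarrow(2)$ is incomplete.
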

\begin{proof}
For the first part of the argument, we proceed as in the proof of Theorem~\ref{thm36}: It is obvious that $(2)$ implies $(3)$. Next we focus on proving that $(3)$ implies $(1)$. Since $S$ is additively reduced and additively Furstenberg, we know that $\atom_{+}(S) \neq \emptyset$. This implies that $S^{\times} \subseteq \atom_{+}(S)$. Indeed, if $u \in S^{\times}$ and $u = s' + s''$ for some $s', s'' \in S^*$, then every element $s \in S^*$ could be written as the sum of two nonzero elements, given by $$s = su^{-1}u = su^{-1}s' + su^{-1}s''.$$ In other words, the set of additive atoms $\atom_+(S)$ would be empty, which is impossible. Now suppose toward a contradiction that there exists an element $s\in\atom_+(S)\setminus S^\times$. Observe that, for any $n\in\NN$, the series expression $\sum_{i=0}^{\infty}sx^{g_i} \in S\sg$ cannot be written as the sum of $n$ irreducibles as each term $sx^{g_i}$ cannot be decomposed additively. This contradiction proves that our hypothesis is untenable. Hence $\atom_+(S) = S^\times$. 

It remains to verify that $(1)$ implies $(2)$. Let $f=\sum_{i=0}^{\infty}s_ix^{g_i}$ with $s_i \in S$ and $g_i \in G$ for every $i \in \NN_0$. Since $S$ is additively reduced, $S[G]^*$ is a divisor-closed submonoid of $S\sg^*$. Consequently, the inclusion $\atom(S[G])\subseteq\atom(S\sg)$ holds. By Theorem~\ref{thm36}, we may assume that $f$ is not a polynomial expression (i.e., $s_i \in S^*$ for each $i \in \NN_0$). Since every bounded-below subset of $G$ is well-ordered, we can define  
\[
    \delta=\min\{g_{i+1}-g_i \mid i\in\NN_0\} \hspace{.5 cm} \text{ and } \hspace{.5 cm} \Delta=\{j\in\NN_0 \mid g_{j+1}-g_j=\delta\}.
\]
We split our reasoning into the following two cases. 

\noindent\textsc{Case 1:} $\Delta$ is finite. In this case, let $\mu=\min \Delta$ and $\nu=\max \Delta$. There exist $p,q\in S\sg$ such that $p=\sum_{i=0}^\infty a_ix^{r_i}$ and $q=\sum_{i=0}^\infty b_ix^{t_i}$ satisfy the following properties:
\begin{itemize}
    \item[(a)] $(a_0,a_1,r_0,r_1)=(s_\mu,s_{\mu+1},g_\mu,g_{\mu+1})$,
    \item[(b)] $r_1-r_0<r_{i+1}-r_i$ for every $i \in \NN$, 
    \item[(c)] $(t_{N+i+1}-t_{N+i})_{i\in\NN_0}$ is strictly increasing for some $N \in \NN_0$,
    \item[(d)] $f=p+q$, and
    \item[(e)] $b_i\in S^*$ for each $i \in \NN_0$.
\end{itemize}
In fact, one can construct such series expressions $p,q\in S\sg$ by letting $p$ and $q$ contain the polynomial expressions $p'=s_\mu x^{g_{\mu}}+s_{\mu+1}x^{g_{\mu+1}}$ and $q'=\sum_{i=0}^{\mu-1}s_ix^{g_i}+\sum_{i=\mu+2}^{\nu+1}s_ix^{g_i}$, respectively. It is not hard to see that $p$ satisfies conditions (a) and (b). Additionally, Lemma~\ref{prop44} guarantees that we can select specific exponents from $\supp(f)$ in our construction of $\supp(q)$ so that the sequence $(t_{i+1}-t_{i})_{i\in\NN_0}$ is eventually strictly increasing. Properties (d) and (e) can be maintained by appropriately distributing the terms of \( f \). Our assumptions that $\atom_+(S)=S^\times$ and that $S$ is additively Furstenberg imply the existence of $u_2,u_3\in S^\times$ and $v_2,v_3\in S$ such that $b_{M + 2}=u_2+v_2$ and $b_{M + 3}=u_3+v_3$ for some $M \in \NN_{> \nu}$. Hence setting $p''\coloneqq p+v_2x^{t_{M + 2}}+u_3x^{t_{M + 3}}$ and $q''\coloneqq q-v_2x^{t_{M + 2}}-u_3x^{t_{M + 3}}$ yields that $f = p''+q''$.
By Lemma~\ref{lem41}, Lemma~\ref{lem42}, and Corollary~\ref{lem43special}, one can check that both $p''$ and $q''$ are irreducibles.

\noindent\textsc{Case 2:} $\Delta$ is infinite. Utilizing a method similar to that of the previous case, we can construct series expressions $p,q\in S\sg$ with $p=\sum_{i=0}^\infty a_ix^{r_i}$ and $q=\sum_{i=0}^\infty b_ix^{t_i}$ satisfying the following properties:
\begin{itemize}
    \item[(a)] $r_1-r_0=r_3-r_2= \delta$,
    \item[(b)] $a_0,a_2,b_j\in S^\times$ for some $j$ sufficiently large, 
    \item[(c)] $(t_{N+i+1}-t_{N+i})_{i\in\NN_0}$ strictly increasing for some $N \in \NN$,
    \item[(d)] $f=p+q$, and
    \item[(e)] $b_i\in S^*$ for each $i \in \NN_0$.
\end{itemize}
Now set $p'\coloneqq a_0x^{r_0}+a_1x^{r_1}+\sum_{i=0}^\infty a_{2i+4}x^{r_{2i+4}}$ and $p''\coloneqq a_2x^{r_2}+a_3x^{r_3}+\sum_{i=0}^\infty a_{2i+5}x^{r_{2i+5}}$. We then note that $f = p' + p'' + q$. One can again check that $p'$, $p''$, and $q$ are irreducibles by Lemma~\ref{lem41}, Lemma~\ref{lem42}, and Corollary~\ref{lem43special}.
\end{proof}

Using Theorem~\ref{thm45}, we can now provide an analogue of the weak Goldbach conjecture for group series semidomains $S\sg$, where $S$ is additively reduced and additively Furstenberg and $G$ is a finitely generated torsion-free group. 

\begin{cor}\label{cor: Goldbach for finitely generated groups} Let $S$ be an additively reduced semidomain and \( G \) a finitely generated torsion-free (abelian) group. The following statements are equivalent.
\smallskip
    \begin{itemize}
        \item[(1)] $\atom_+(S)=S^\times$.
        \item[(2)] Every $f\in S\sg$ with $|\supp(f)|>1$ can be expressed as the sum of at most three irreducibles.
        \item[(3)] Every $f\in S\sg$ with $|\supp(f)|>1$ can be expressed as the sum of at most $n$ irreducibles for some $n \in \NN_{>2}$.
    \end{itemize}
\end{cor}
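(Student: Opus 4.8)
The plan is to deduce this corollary directly from Theorem~\ref{thm45}. Since $G$ is finitely generated and torsion-free abelian, the structure theorem gives an isomorphism $G\cong\mathbb{Z}^n$ for some $n\in\mathbb{N}_0$. Fixing such an isomorphism, I would look for a compatible total order on $\mathbb{Z}^n$ in which every bounded-below subset is well-ordered: for any ordered group with that property the three statements of the corollary are verbatim the three statements of Theorem~\ref{thm45}, so the equivalence follows at once, the only remaining work being the routine transport of the condition $\atom_+(S)=S^\times$ and of the assertions about expressibility as a sum of at most $n$ irreducibles along the isomorphism $S\sg\cong S\llbracket\mathbb{Z}\rrbracket$.

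The whole argument therefore reduces to the order-theoretic question of whether $\mathbb{Z}^n$ carries a compatible total order in which every bounded-below subset has a least element. My first attempt would be the lexicographic order, verified by induction on $n$: the cases $n=0,1$ are the usual orders on $\{0\}$ and $\mathbb{Z}$, and in the inductive step one would split a bounded-below set $A\subseteq\mathbb{Z}\times\mathbb{Z}^{n-1}$ by first coordinate, choose the least first coordinate $c$ occurring in $A$ (base case), and extract the least element of the fiber $A\cap(\{c\}\times\mathbb{Z}^{n-1})$ (inductive hypothesis, after checking that fiber is bounded below).

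I expect this inductive step to be the main obstacle, and in fact it cannot be carried through for $n\ge 2$: the set $\{(0,-k)\mid k\in\mathbb{N}\}\subseteq(\mathbb{Z}^2,\le_{\mathrm{lex}})$ is bounded below by $(-1,0)$ yet has no least element, and more is true --- a H\"older-type argument shows that no compatible order on $\mathbb{Z}^n$ with $n\ge 2$ enjoys the required well-ordering property, since such a group would be forced to be Archimedean, hence a discrete subgroup of $\mathbb{R}$, hence cyclic. The way around this is to observe that the ambient requirement that $S\sg$ be a semidomain already excludes the problematic groups: since $S$ is additively reduced, the support of a product of two elements of $S\sg$ equals the sumset of their supports, and for $G\cong\mathbb{Z}^n$ with $n\ge 2$ (say with a lexicographic order) one can exhibit $p,q\in S\sg$ --- for instance $p=\sum_{i\ge 0}x^{g_i}$ with $(g_i)$ strictly increasing but bounded above and $q=1+x^h$ with $h$ chosen so that $h+g_0$ exceeds every $g_i$ --- whose product has support of order type exceeding $\omega$, so that $pq\notin S\sg$. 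Hence the only nontrivial finitely generated torsion-free $G$ for which the statement carries content is $G\cong\mathbb{Z}$ with its usual order, to which Theorem~\ref{thm45} applies directly; the trivial group, when admitted, is treated separately, where statements $(2)$ and $(3)$ are vacuous and one reads the hypothesis on $G$ as nontriviality.
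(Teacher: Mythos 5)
Your reduction to the order-theoretic question, and your conclusion that no compatible total order on $\mathbb{Z}^n$ with $n\ge 2$ makes every bounded-below subset well-ordered, are both correct; this is precisely why the corollary is not a literal instance of Theorem~\ref{thm45}, and the paper itself flags the example $\{(0,-m)\mid m\in\mathbb{N}_0\}\subseteq\mathbb{Z}^2$. But your way out --- declaring that $S\llbracket \mathbb{Z}^n\rrbracket$ fails to be a semidomain for $n\ge 2$, so that the statement carries content only for $G\cong\mathbb{Z}$ --- is not the intended resolution and contradicts the paper's explicit remark that the corollary covers cases beyond Theorem~\ref{thm45}. The key idea you are missing is an induction on the rank that changes the \emph{coefficient semidomain} rather than the order on the group: one identifies $S\llbracket\mathbb{Z}^{n+1}\rrbracket$ with $S\llbracket\mathbb{Z}^{n}\rrbracket\llbracket\mathbb{Z}\rrbracket$, checks that $S\llbracket\mathbb{Z}^{n}\rrbracket$ is again an additively reduced (and additively Furstenberg) semidomain with $\atom_+(S\llbracket\mathbb{Z}^{n}\rrbracket)=S\llbracket\mathbb{Z}^{n}\rrbracket^\times$, and then applies Theorem~\ref{thm45} to this larger coefficient semidomain and the group $\mathbb{Z}$, whose bounded-below subsets \emph{are} well-ordered. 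The problematic order theory of $\mathbb{Z}^{n+1}$ is absorbed into the coefficients, where it does no harm.

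Your closure objection does detect a real imprecision: under the paper's literal definition of $S\sg$ (supports enumerated as strictly increasing $\omega$-sequences), your example $p=\sum_{i\ge 0}x^{(0,i)}$, $q=1+x^{(1,0)}$ yields a product whose support has order type $\omega\cdot 2$ in the lexicographic order, and for the same reason the identification $S\llbracket\mathbb{Z}^{n+1}\rrbracket\cong S\llbracket\mathbb{Z}^{n}\rrbracket\llbracket\mathbb{Z}\rrbracket$ fails, since the right-hand side contains elements with support of order type up to $\omega^2$. The correct repair, however, is not to discard the groups but to read the definition in the Hahn--Mal'cev--Neumann sense, allowing arbitrary well-ordered supports: sumsets of well-ordered subsets of an ordered group are well-ordered with finite fibers, so $S\llbracket G\rrbracket$ is then a genuine semidomain for every linearly ordered torsion-free $G$, the iterated-series identification holds, and the corollary has nonvacuous content for every $n$. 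Under that reading your argument establishes only the base case $n=1$ and leaves the inductive step --- which is the entire substance of the paper's proof --- unaddressed.
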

\begin{proof}
    It is obvious that (2) implies (3). Moreover, note that our proof that (3) implies (1) in Theorem~\ref{thm45} does not depend on the group $G$. Consequently, we can focus on showing that (1) implies (2). 
    
    By the Fundamental Theorem of Finitely Generated Abelian Groups, we know that $G$ is free abelian and thus isomorphic to $\ZZ^n$ for some $n \in \NN$. We proceed by induction on the rank $n$. For $n = 1$, our result follows from Theorem~\ref{thm45} as every bounded-below subset of $\ZZ$ is well-ordered. Suppose that every $f\in S\llbracket \ZZ^n \rrbracket$ with $|\supp(f)|>1$ can be expressed as the sum of at most three irreducibles. Observe that $S\llbracket \ZZ^n \rrbracket$ is additively reduced given that $S$ is additively reduced. Moreover, it is not hard to see that $\atom_+(S\llbracket \ZZ^n \rrbracket)=S\llbracket \ZZ^n \rrbracket^\times$. Now since $S\llbracket \ZZ^{n + 1} \rrbracket$ is isomorphic to $S\llbracket \ZZ^{n} \rrbracket\llbracket \ZZ \rrbracket$ and every subset of $\ZZ$ that is bounded below is well-ordered, every $f\in S\llbracket \ZZ^{n + 1} \rrbracket$ with $|\supp(f)|>1$ can be expressed as the sum of at most three irreducibles by Theorem~\ref{thm45}.  
\end{proof}

Note that Corollary~\ref{cor: Goldbach for finitely generated groups} addresses cases not covered by Theorem~\ref{thm45}. For instance, with respect to the lexicographical order, the subset \( \{(0, -m) \mid m \in \mathbb{N}_0\} \) of \( \mathbb{Z}^2 \) is bounded below but not well-ordered, as it contains no minimum element.

\smallskip
\section*{Acknowledgments}
The authors express gratitude to their mentor, Dr. Harold Polo, for his invaluable guidance, assistance, and support throughout the research and writing phases. They would also like to thank Dr. Felix Gotti for his indispensable feedback and suggestions on the paper. Finally, the authors extend their appreciation to the MIT PRIMES-USA program for enabling this research opportunity. \smallskip

\end{document}